\theoremstyle{plain}\newtheorem{definition}{Definition}[section]
\theoremstyle{definition}\newtheorem{theorem}{Theorem}[section]
\theoremstyle{plain}\newtheorem{lemma}[theorem]{Lemma}
\theoremstyle{plain}\newtheorem{coro}[theorem]{Corollary}
\theoremstyle{plain}
\theoremstyle{remark}\newtheorem{remark}{Remark}[section]
\newcommand{\Div}{\mathrm{div}\,}
\newcommand{\B}{\Big}
\newcommand{\be}{\begin{equation}}
\newcommand{\ee}{\end{equation}}
 \newcommand{\ba}{\begin{aligned}}
 \newcommand{\ea}{\end{aligned}}
  \newcommand{\f}{\frac}
  \newcommand{\ben}{\begin{enumerate}}
   \newcommand{\een}{\end{enumerate}}
\newcommand{\Rmnum}[1]{\expandafter\@slowromancap\romannumeral #1@}
\numberwithin{equation}{section}
\begin{document}
\title{Energy equality   in     the isentropic compressible
Navier-Stokes equations allowing vacuum }
\author{Yulin Ye\footnote{School of Mathematics and Statistics,
Henan University,
Kaifeng, 475004,
P. R. China. Email: ylye@vip.henu.edu.cn},
\;~   Yanqing Wang\footnote{Corresponding author.  College of Mathematics and   Information Science, Zhengzhou University of Light Industry, Zhengzhou, Henan  450002,  P. R. China Email: wangyanqing20056@gmail.com}~  and\, Wei Wei\footnote{School of Mathematics and Center for Nonlinear Studies, Northwest University, Xi'an, Shaanxi 710127,  P. R. China  Email: ww5998198@126.com }
}
\date{}
\maketitle
\begin{abstract}

It is well-known that  a Leray-Hopf weak solution in   $L^4 (0,T; L^4(\Omega))$  for the incompressible Navier-Stokes system is persistence of energy due to Lions \cite{[Lions]}. In this paper, it is  shown that
  Lions's   condition  for energy balance is also valid for the   weak solutions of  the  isentropic compressible Navier-Stokes equations  allowing vacuum
under suitable integrability conditions on the density and its derivative.
This allows us to establish
  various
sufficient conditions implying energy equality for the compressible flow as well as
  the non-homogenous incompressible  Navier-Stokes equations. This is an improvement     of corresponding results obtained by
Yu in \cite[ Arch. Ration. Mech. Anal., 225 (2017)]{[Yu2]}, and our criterion via the gradient of the velocity partially
 answers a question posed by Liang in
 \cite[Proc. Roy. Soc.
Edinburgh Sect. A (2020)]{[Liang]}.
  \end{abstract}
\noindent {\bf MSC(2000):}\quad 35Q30, 35Q35, 76D03, 76D05\\\noindent
{\bf Keywords:} compressible Navier-Stokes equations;  energy equality;  vacuum 
\section{Introduction}
\label{intro}
\setcounter{section}{1}\setcounter{equation}{0}
 In this paper, we are concerned with the classical isentropic compressible Navier-Stokes equations in a periodic domain $\Omega=\mathbb{T}^{d}$ with $d=2,3$
\be\left\{\ba\label{INS}
&\rho_t+\nabla \cdot (\rho v)=0, \\
&(\rho v)_{t} +\Div(\rho v\otimes v)+\nabla
P(\rho )- \mu\Delta v- (\mu+\lambda) \nabla\text{div\,}v=0,\\
\ea\right.\ee
 where $\rho\geq 0$ stands for the density of the flow, $v$
represent the fluid  velocity field and $P(\rho)=\rho^{\gamma}$ with $\gamma >1$ is the scalar pressure.
The viscosity coefficients $\mu$ and $\lambda$ satisfy $\mu\geq 0$ and $2\mu+d\lambda>0$. This model usually describes the motion of a compressible viscous barotropic fluid.
We complement equations \eqref{INS} with initial data
\begin{equation}\label{INS1}
	\rho(0,x)=\rho_0(x),\ (\rho v)(0,x)=(\rho_0 v_0)(x),\ x\in \Omega,
\end{equation}
where we define $v_0 =0$ on the sets $\{x\in \Omega:\ \rho_0=0\}.$

Recently,
 originated from
Yu's work  \cite{[Yu2]},
 the   energy conservation
of weak solutions in the  compressible fluid equations
 attracts a lot of attention (see e.g.\cite{[Yu2],[Liang],[NNT],[NNT1],[NNT2],[CLWX],[WY],[CY],[EGSW], [ADSW]}).
Towards critical regularity for a weak solution keeping energy  equality, the main result in \cite{[Yu2]} is that if a weak solution $(\rho, v)$  in the sense of Definition  \ref{wsdefi} satisfies
\be\ba\label{yu}
0\leq\rho\leq c<\infty, \ \ \nabla\sqrt{\rho} \in L^{\infty}(0,T;L^{2}(\mathbb{T}^{d})),\\
v\in L^{p}(0,T;L^{q}(\mathbb{T}^{d})) ~~p\geq4   \ \text{ and}\   q\geq6,\ and\ v_0\in L^{q_0},\ q_0\geq 3,
\ea
\ee
  the following energy equality is valid
     \begin{equation}\label{EI}\ba
\int_{\mathbb{T}^{d}}\left( \frac{1}{2}\rho |v|^2+\f{\rho^{\gamma}}{\gamma-1} \right) dx +\int_0^T\int_{\mathbb{T}^{d}}&\left[\mu|\nabla v|^2+(\mu+\lambda)|\Div v|^2 \right] dxdt\\=&\int_{\mathbb{T}^{d}}\left( \frac{1}{2} \rho_0 |v_0|^2+\f{\rho_{0}^{\gamma}}{\gamma-1}\right)dx.
\ea
\end{equation}
The famous Onsager's conjecture \cite{[Onsager]} is what is the critical regularity for a weak solution conserving energy in the
incompressible  Euler equations. In this direction, a celebrated result due to Constantin-E-Titi \cite{[CET]} states that a weak solution of the Euler system conserves the energy if $v$ is in $L^{3}(0,T; B^{\alpha}_{3,\infty})$ with $\alpha>1/3.$
The  negative results of Onsager's conjecture can be found in \cite{[BDSV],[Isett]}.
For the 3D incompressible Navier-Stokes,   a kind of well-known positive
energy conservation class is
Lions-Shinbrot type criterion in terms of integrability of the velocity $v$. Particularly, in \cite{[Lions]}, Lions showed that a weak solution
$v$   satisfies the energy equality in the  incompressible Navier-Stokes in any spatial dimension if the velocity $v$ meets
\be\label{lions}
v\in L^{4}(0,T;L^{4}(\mathbb{T}^d)).\ee
Lions's energy conservation criterion  was  generalized by Shinbrot  in \cite{[Shinbrot]} to
\be\label{Shinb}
v\in L^{p}(0,T;L^{q}(\mathbb{T}^d)),~\text{with}~\f{2}{p}+
 \f{2}{q}=1~\text{and}~q\geq 4.\ee
For $q<4$, Lions-Shinbrot type criterion in tri-dimensional space states
\be\label{TBY}
v\in L^{p}(0,T;L^{q}(\mathbb{T}^3)), ~\text{with}~ \f{1}{p}+
 \f{3}{q}=1 ~\text{and}~3<q< 4,
 \ee
which was studied by Taniuchi \cite{[Taniuchi]} and Beirao da Veiga-Yang \cite{[BY]}. Recent progress about Lions-Shinbrot type criterion  can be found in \cite{[CCFS],[CL],[BC],[Zhang],[WY],[Yu1],[Yu3],[WWY]}.

   Now, we turn attention back to compressible fluid.
 For equations \eqref{INS} posed an open  bounded domain $\Omega$ with  Dirichlet boundary condition, Chen-Liang-Wang-Xu \cite{[CLWX]} showed that
 \eqref{yu} yields  the energy balance law \eqref{EI}.
 For the general  compressible models including \eqref{INS}  in the absence of vacuum,  it is shown that
the following  conditions
 \be\ba\label{NNT}
 &0<  c_{1}\leq\rho\leq c_{2}<\infty,  v\in L^{\infty}(0,T; L^{2}
 (\mathbb{T}^{d})), \nabla v\in L^{2}(0,T; L^{2}(\mathbb{T}^{d})),   \\&\sup_{t\in(0,T)}\sup_{|h|<\varepsilon}|h|^{-\f12}
 \|\rho(\cdot+h,t)-\rho(\cdot,t)\|_{L^{2}(\mathbb{T}^{d})}<\infty,\\
 &v\in L^{4}(0,T;L^{4}(\mathbb{T}^{d})).
 \ea\ee
 ensure  that a  weak solution meets the energy  equality by
 Nguye-Nguye-Tang in \cite{[NNT]}.
The energy conservation criterion $\eqref{NNT}$ is still valid if both   $\eqref{NNT}_{2}$ and $\eqref{NNT}_{3}$ are replaced by
      \eqref{Shinb} or \eqref{TBY}, which was recently proved in \cite{[WY]}. We also refer the readers to \cite{[Liang],[WY]} for energy  equality class via the gradient of the velocity.
      Roughly speaking, from  the aforementioned works, the  Lions-Shinbrot type  energy
  conservation class    for the isentropic compressible Navier-Stokes equations  in the case away
from vacuum is the same as the incompressible Naiver-Stokes equations.  A natural question is weather this type  energy
conservation class is valid when
the
initial data of system \eqref{INS} contain vacuums.
The objective of this paper   will be devoted to
this issue.

We state our first result   as follows.
\begin{theorem}\label{the1.1}
The energy equality \eqref{EI} of   weak solutions  $(\rho, v)$ to the isentropic compressible Navier-Stokes equations \eqref{INS}-\eqref{INS1} holds if one of the following four conditions is satisfied
 \begin{enumerate}[(1)]
 \item $0 \leq\rho\leq c_{2}<\infty$, $\nabla\sqrt{\rho}\in L^{4}(0,T;L^{4}(\mathbb{T}^{d})),$  $ \nabla v\in L^{2}(0,T; L^{2}(\mathbb{T}^{d}))$,  $v_0\in L^2(\mathbb{T}^{d})$,
 \be\label{vei}
v\in L^{4}(0,T;L^{4}(\mathbb{T}^{d}));\ee
 \item $0 \leq\rho\leq c_{2}<\infty$, $\nabla\sqrt{\rho}\in L^{4}(0,T;L^{4}(\mathbb{T}^{d})),$  $ \nabla v\in L^{2}(0,T; L^{2}(\mathbb{T}^{d}))$,  $v_0\in L^2(\mathbb{T}^{d})$,\be\label{vei819}
v\in L^{p}(0,T;L^{q}(\mathbb{T}^{d}))~ \text{with} ~ \f{1}{p}+
\f{3}{q}=1, 3<q\leq  4; \ee
\item \label{2vei2} $0 \leq\rho\leq c_{2}<\infty$,    $\nabla\sqrt{\rho}\in L^{4}(0,T; L^{2}(\mathbb{T}^{d}))$, $ \nabla v\in L^{2}(0,T; L^{2}(\mathbb{T}^{d}))$,  $v_0\in L^3(\mathbb{T}^{d})$
 \be\label{vei2}
v\in L^{4}(0,T;L^{6}(\mathbb{T}^{d}));   \ee
 \item $0 \leq\rho\leq c_{2}<\infty$, $\nabla\sqrt{\rho}\in L^{4}(0,T;L^{4}(\mathbb{T}^{d})),$       and $ \nabla v\in L^{2}(0,T; L^{2}(\mathbb{T}^{d}))$,  $v_0\in L^2(\mathbb{T}^{d})$
 \be\label{tivei}
  \nabla v\in L^{p}(0,T;L^{r}(\mathbb{T}^{d}))  ~\text{with}~ \f{1}{p}+
  \f{3}{r}=1+\f{3}{d}, \f{3d}{d+3}<r\leq \f{4d}{d+4}.
 \ee
  \end{enumerate}
\end{theorem}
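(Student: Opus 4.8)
\emph{Overall strategy.} The plan is to split \eqref{EI} into an internal (pressure) balance and a kinetic balance and to treat them by different mechanisms. For the internal part, since $0\le\rho\le c_2$ and $\nabla v\in L^{2}(0,T;L^{2})$, the density is a renormalized solution of \eqref{INS}$_1$ in the sense of DiPerna--Lions; taking the renormalization $\beta(\rho)=\rho^{\gamma}/(\gamma-1)$ and integrating over $\mathbb{T}^{d}$ gives $\frac{d}{dt}\int_{\mathbb{T}^{d}}\frac{\rho^{\gamma}}{\gamma-1}\,dx=-\int_{\mathbb{T}^{d}}P(\rho)\,\Div v\,dx$, which is licit because $P(\rho)\in L^{\infty}$ and $\Div v\in L^{2}$. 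It then remains only to prove the kinetic identity $\frac{d}{dt}\int_{\mathbb{T}^{d}}\frac12\rho|v|^{2}\,dx=\int_{\mathbb{T}^{d}}P(\rho)\Div v\,dx-\mu\int_{\mathbb{T}^{d}}|\nabla v|^{2}\,dx-(\mu+\lambda)\int_{\mathbb{T}^{d}}|\Div v|^{2}\,dx$; summing the two and integrating in time yields \eqref{EI}. Formally this kinetic identity follows on testing \eqref{INS}$_2$ with $v$ and \eqref{INS}$_1$ with $\tfrac12|v|^{2}$, whereupon the two convective contributions $\int\rho v\otimes v:\nabla v$ appear with opposite signs and cancel exactly; the whole difficulty is to justify this cancellation and the chain rule for $t\mapsto\int\frac12\rho|v|^{2}$ under the stated integrability, with $\rho$ allowed to vanish.

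\emph{Regularization and the two commutators.} First I would mollify \eqref{INS}$_1$ and \eqref{INS}$_2$ by a Friedrichs kernel $\eta_\varepsilon$ in space and, to make the time derivative licit, in time as well on $(\delta,T-\delta)$, writing $f^{\varepsilon}=f*\eta_{\varepsilon}$. Testing the mollified momentum balance against $v^{\varepsilon}$ and the mollified continuity equation against $\tfrac12|v^{\varepsilon}|^{2}$ reduces matters to passing to the limit in a regularized energy identity: the viscous terms converge by $\nabla v\in L^{2}(0,T;L^{2})$, the pressure term by $0\le\rho\le c_{2}$ and $\Div v\in L^{2}$, and the endpoint values $\int\frac12\rho^{\varepsilon}|v^{\varepsilon}|^{2}\,dx\big|_{0}^{T}$ converge to the correct limit by the weak $L^{2}$ continuity of $\sqrt{\rho}\,v$ and the prescribed integrability of $v_{0}$. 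Exactly two genuine commutators survive. The first is the Lions--Constantin--E--Titi convective commutator \cite{[Lions],[CET]}
\be\label{plan-conv}
\mathcal{C}_{\varepsilon}=\int_{0}^{T}\!\!\int_{\mathbb{T}^{d}}\big[(\rho v_{i}v_{j})^{\varepsilon}-(\rho v_{j})^{\varepsilon}v_{i}^{\varepsilon}\big]\,\partial_{j}v_{i}^{\varepsilon}\,dx\,dt,
\ee
which must tend to $0$; with $a=\rho v_{j}$, $b=v_{i}$ its integrand is $\big[(ab)^{\varepsilon}-a^{\varepsilon}b^{\varepsilon}\big]\,\partial_{j}v_{i}^{\varepsilon}$. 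The second is a density commutator measuring the failure of $(\rho v)^{\varepsilon}=\rho^{\varepsilon}v^{\varepsilon}$, whose size is controlled by $\nabla\rho=2\sqrt{\rho}\,\nabla\sqrt{\rho}$; since $\sqrt{\rho}\le\sqrt{c_{2}}$, this is exactly why a norm of $\nabla\sqrt{\rho}$ enters the hypotheses.

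\emph{Case analysis.} Because $\rho$ is bounded, $\mathcal{C}_{\varepsilon}$ is controlled as soon as $\rho v\otimes v$ can be paired with $\nabla v\in L^{2}(0,T;L^{2})$. In Case (1), $v\in L^{4}(0,T;L^{4})$ yields $\rho v\otimes v\in L^{2}(0,T;L^{2})$, so $(ab)^{\varepsilon}\to ab$ and $a^{\varepsilon}b^{\varepsilon}\to ab$ strongly in $L^{2}(0,T;L^{2})$ and $\mathcal{C}_{\varepsilon}\to0$ by the soft argument of Lions \cite{[Lions]}, while the density commutator is absorbed using $\nabla\sqrt{\rho}\in L^{4}(0,T;L^{4})$ paired with $v\in L^{4}(0,T;L^{4})$. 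Case (4) reduces to Case (2): by the Gagliardo--Nirenberg--Sobolev embedding $W^{1,r}\hookrightarrow L^{r^{\ast}}$ with $\tfrac1{r^{\ast}}=\tfrac1r-\tfrac1d$, the hypothesis $\nabla v\in L^{p}(0,T;L^{r})$ with $\tfrac1p+\tfrac3r=1+\tfrac3d$ is equivalent (modulo the spatial mean, controlled by the energy) to $v\in L^{p}(0,T;L^{r^{\ast}})$ with $\tfrac1p+\tfrac3{r^{\ast}}=1$ and $3<r^{\ast}\le4$, which is precisely \eqref{vei819}; Case (1) is its endpoint $r^{\ast}=4$. In the genuinely sub-$L^{4}$ range $3<q<4$ one no longer has $\rho v\otimes v\in L^{2}(0,T;L^{2})$, so I would replace the soft argument by the quantitative estimates $\|f^{\varepsilon}-f\|_{L^{q}}\lesssim\varepsilon^{s}$ and $\|\nabla f^{\varepsilon}\|_{L^{q}}\lesssim\varepsilon^{s-1}$, with the scaling $\tfrac1p+\tfrac3q=1$ chosen so that the powers of $\varepsilon$ cancel; this gives a uniform bound for $\mathcal{C}_{\varepsilon}$, upgraded to $\mathcal{C}_{\varepsilon}\to0$ by approximating $v$ by smooth fields, as in Taniuchi \cite{[Taniuchi]} and Beir\~ao da Veiga--Yang \cite{[BY]}. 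Case (3) is separate: $v\in L^{4}(0,T;L^{6})$ gives $\rho v\otimes v\in L^{2}(0,T;L^{3})\hookrightarrow L^{2}(0,T;L^{2})$ with room to spare in $\mathcal{C}_{\varepsilon}$, and this slack is what permits the weaker $\nabla\sqrt{\rho}\in L^{4}(0,T;L^{2})$ in the density commutator, at the cost of requiring $v_{0}\in L^{3}$ to close the endpoint term.

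\emph{Main obstacle.} The technical heart is the sub-$L^{4}$ regime of Cases (2) and (4): there $\mathcal{C}_{\varepsilon}$ does not vanish by compactness and must be driven to zero by the $\varepsilon$-quantitative estimates, and this has to be carried out simultaneously with the density commutator, so that the velocity scaling $\tfrac1p+\tfrac3q=1$ and the integrability of $\nabla\sqrt{\rho}$ are compatible and every power of $\varepsilon$ closes. A secondary, vacuum-specific difficulty is the passage to the limit at the temporal endpoints: since $\rho$ may vanish one controls only $\sqrt{\rho}\,v$, not $v$, so the convergence $\int\frac12\rho^{\varepsilon}|v^{\varepsilon}|^{2}\,dx\to\int\frac12\rho|v|^{2}\,dx$ at $t=0,T$ must be extracted from weak $L^{2}$ continuity of $\sqrt{\rho}\,v$ together with the prescribed integrability of $v_{0}$, rather than from any pointwise control of $v$.
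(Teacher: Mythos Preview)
Your overall architecture (mollify, test with $v^{\varepsilon}$, isolate a convective commutator and a density commutator, handle the temporal endpoints via weak continuity of $\sqrt{\rho}\,v$) is close in spirit to the paper's, but the paper organizes things differently and, crucially, handles Case~(2) by a completely different and much simpler mechanism.

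\textbf{Where your plan diverges.} The paper does \emph{not} attack the sub-$L^{4}$ range by quantitative $\varepsilon$-balancing. Instead it first proves a single general statement (Theorem~\ref{the1.2}): for any $p,q\ge 4$, energy equality holds under $v\in L^{p}(L^{q})$, $\nabla v\in L^{p/(p-2)}(L^{q/(q-2)})$, $\nabla\sqrt{\rho}\in L^{p/(p-3)}(L^{q/(q-3)})$. The commutators there are controlled by the Lions-type lemma (Lemma~\ref{pLions}) applied to $\partial_t(\rho v)^{\varepsilon}-\partial_t(\rho\,v^{\varepsilon})$, together with a purely soft estimate on $(\rho v\otimes v)^{\varepsilon}-(\rho v)\otimes v^{\varepsilon}$ in $L^{p/2}(L^{q/2})$ paired against $\nabla v^{\varepsilon}\in L^{p/(p-2)}(L^{q/(q-2)})$. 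Case~(1) is then just $p=q=4$, and Case~(3) is $p=4$, $q=6$.

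\textbf{The key point you miss in Case~(2).} The paper does not run any Taniuchi--Beir\~ao da Veiga--Yang type $\varepsilon$-accounting. It simply observes that, on $\mathbb{T}^{d}$ with $d\le 3$, the Gagliardo--Nirenberg inequality and $\nabla v\in L^{2}(0,T;L^{2})$ give
\[
\|v\|_{L^{4}(L^{4})}\;\le\;C\,\|v\|_{L^{2}(L^{6})}^{\frac{3(4-q)}{2(6-q)}}\,\|v\|_{L^{p}(L^{q})}^{\frac{q}{2(6-q)}}\;\le\;C\big(\|\nabla v\|_{L^{2}(L^{2})}+\|v\|_{L^{p}(L^{q})}\big),
\]
so the hypothesis $v\in L^{p}(L^{q})$ with $\tfrac{1}{p}+\tfrac{3}{q}=1$, $3<q\le 4$, already implies $v\in L^{4}(L^{4})$. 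Case~(2) thereby reduces to Case~(1) with no commutator work at all in the sub-$L^{4}$ regime. Your ``main obstacle'' --- making the powers of $\varepsilon$ close for $3<q<4$ simultaneously with the density commutator, in the absence of $v\in L^{\infty}(L^{2})$ due to vacuum --- simply does not arise in the paper's proof. As written, your proposal leaves precisely this step unresolved, so it is a genuine gap; the interpolation above is the missing idea that closes it.

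\textbf{Smaller structural differences.} The paper uses the double-mollified test function $(\phi\,v^{\varepsilon})^{\varepsilon}$ as in \cite{[Yu2]}, which produces the time-derivative commutator $\partial_t(\rho v)^{\varepsilon}-\partial_t(\rho\,v^{\varepsilon})$ treated via Lemma~\ref{pLions}; this is where $\rho_{t}$ and $\nabla\rho$ (hence $\nabla\sqrt{\rho}$) enter. Your alternative --- test the mollified momentum equation against $v^{\varepsilon}$, the mollified continuity equation against $\tfrac12|v^{\varepsilon}|^{2}$, and obtain the internal-energy balance by renormalization --- is a legitimate variant, but it leads to a different catalogue of commutators than the one the paper actually estimates. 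For Case~(4), both you and the paper reduce to Case~(2) via Sobolev embedding; the paper spells out how to control $\int_{\Omega}v\,dx$ from $\sqrt{\rho}\,v\in L^{\infty}(L^{2})$ and mass conservation, which in your sketch is only alluded to.
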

\begin{remark}
This theorem extends the well-known Lions's energy equality criterion to the isentropic compressible equations \eqref{INS} with vacuum.
Sufficient conditions \eqref{TBY} implying energy equality  are also   generalized from the incompressible fluid to the  compressible
flow allowing vacuum.
\end{remark}
 \begin{remark}
 Compared with Yu' class \eqref{yu}, the condition \eqref{vei2}
   relaxes the  integrability of the density.
\end{remark}
\begin{remark}
 In \cite[Remark 1.7, Proc. Roy. Soc. Sect. A (2020)]{[Liang]}, Liang mentioned a question that one considers the energy equality sufficient condition via the gradient of the velocity when the vacuum state is allowed. From \eqref{tivei}, we see that  this result gives an affirmative answer to this question.
\end{remark}
 \begin{remark}
By means of a global mollification combined with an independent boundary cut-off used in  \cite{[CLWX]}, one may generalize Theorem \ref{the1.1}   from periodic boundary to Dirichlet boundary.
\end{remark}
We give some comments on the proof of this theorem.
In contrast  with the works of \cite{[WY],[NNT]}  without vacuum, it seems that
the test function and the Constantin-E-Titi type commutators on mollifying kernel   break  down in the presence of vacuum. The test function as \cite{[Yu2]}, a Lions's type commutator involving space-time mollifier and the
  strategy for energy equality condition in
terms of both the velocity and its gradient in \cite{[WY]} help us prove Theorem \ref{the1.1} and deal with the vacuum, (see also the Theorem \ref{the1.2}). It is remarking that a new ingredient
  is that we present  a generalized  Lions's type commutators involving the  integral norms with different exponents in space and
time, and outline its proof, see Lemma  \ref{pLions} for the details.
Moreover, since the initial vacuum is allowed, one just obtains the energy  $\sqrt{\rho}v\in L^{\infty}(0,T;L^{2}(\Omega))$ rather than $v\in L^{\infty}(0,T;L^{2}(\Omega))$.
It is  the main difficulty caused by this   to derive the blow up criterion of a strong solution of system with vacuum in terms of the gradient of the velocity.   We refer the readers to \cite{[JWY]} for the  detail.
In \cite[Remark 5.1, p4]{[Lions2]},
Lions mentioned that one can get further regularity $v\in L^{2}(0,T;L^{2}(\Omega))$ in equations \eqref{INS} for the periodic case via careful Poincar\'e type inequality. However, it seems that it is difficult to use  $v\in L^{2}(0,T;L^{2}(\Omega))$ in the derivation of \eqref{tivei}.
 To prove \eqref{tivei} and deal with the vacuum, in the spirit of   \cite[Remark 5.1, p4]{[Lions2]}, we make full use of $\nabla v\in L^{p}(0,T;L^{q}(\mathbb{T}^{3}))$ and apply the generalized Lions type Poincar\'e inequality to overcome the  difficulty. Based on this, roughly speaking, we observe that  $\nabla v\in L^{p}(0,T;L^{q}(\mathbb{T}^{3}))$ means $\int_{\Omega}vdx\in L^{p}(0,T)$. This is the critical point to get \eqref{tivei}.

As a by-product of above theorem, one further  derives various energy conservation class for the three-dimensional  isentropic compressible Navier-Stokes equations \eqref{INS}.
\begin{coro}\label{coro1.2}
Let $(\rho,v)$ be a weak solution of isentropic compressible Navier-Stokes equations \eqref{INS}-\eqref{INS1} in the sense of Definition \ref{wsdefi}. Assume  the density $\rho$ satisfies one of the following condition
\begin{enumerate}[(1)]
 \item $0 \leq\rho\leq c_{2}<\infty, \nabla\sqrt{\rho}\in L^{4}(0,T;L^{4}(\mathbb{T}^{3}));$
 \item $\nabla\sqrt{\rho}\in L^{\infty}(0,T;L^{4}(\mathbb{T}^{3}))$;
\item $0\leq\rho\leq c<\infty,$ $ \nabla\sqrt{\rho}\in L^{\infty}(0,T;L^{2}(\mathbb{T}^{3}))$ and
$\nabla\sqrt{\rho}\in L^{p_{1}}(0,T;L^{q_{1}}(\mathbb{T}^{3})), with\f{1}{p_{1}}+\f{1}{q_{1}}=\f12,q_{1}\geq4$;
\end{enumerate}
 and the velocity meet one of the following
\begin{enumerate}[(i)]
\item  $ \nabla v\in L^{2}(0,T; L^{2}(\mathbb{T}^{3}))$
$
v\in L^{p}(0,T;L^{q}(\mathbb{T}^{3}))  ~\text{with}~ \f{1}{p}+
\f{3}{q}=1, 3<q\leq 4,\ v_0\in L^2(\mathbb{T}^{3});  $
\item
$ \nabla v\in L^{2}(0,T; L^{2}(\mathbb{T}^{3}))$
 $
   \nabla v\in L^{p}(0,T;L^{r}(\mathbb{T}^{3}))  ~\text{with}~ \f{1}{p}+
   \f{3}{r}=2, \ \f{3}{2}<r\leq \f{12}{7},\ v_0\in L^2(\mathbb{T}^{3}).
$
\end{enumerate}
\end{coro}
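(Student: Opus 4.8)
The plan is to deduce Corollary \ref{coro1.2} directly from Theorem \ref{the1.1} by checking that each admissible pairing of a density hypothesis with a velocity hypothesis falls into one of its four regimes with $d=3$, the conclusion being the energy equality \eqref{EI}. The velocity side requires no work: condition (i) is precisely the hypothesis \eqref{vei819} of Theorem \ref{the1.1}(2) (together with $\nabla v\in L^{2}(0,T;L^{2})$ and $v_0\in L^{2}$), while condition (ii), after substituting $d=3$ into \eqref{tivei}, reads $\frac1p+\frac3r=1+\frac3d=2$ with $\frac32=\frac{3d}{d+3}<r\leq\frac{4d}{d+4}=\frac{12}{7}$, which is exactly Theorem \ref{the1.1}(4). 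Hence the whole content of the corollary is the claim that each of the three density hypotheses implies the density requirement common to cases (2) and (4), namely $0\leq\rho\leq c_2<\infty$ together with $\nabla\sqrt\rho\in L^{4}(0,T;L^{4}(\mathbb{T}^{3}))$.

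Density hypothesis (1) is already this requirement, so nothing is to be done there. For hypothesis (2) I would first upgrade $\nabla\sqrt\rho\in L^{\infty}(0,T;L^{4})$ to an $L^{\infty}$ bound on $\rho$: since every weak solution conserves mass, $\|\sqrt\rho\|_{L^2}^2=\int_{\mathbb{T}^3}\rho\,dx=\int_{\mathbb{T}^3}\rho_0\,dx$ is constant in time, and on the torus one has the splitting $\|f\|_{L^\infty}\leq C\|f\|_{L^2}+C\|\nabla f\|_{L^4}$, where the first term controls the mean and the second the oscillation through $W^{1,4}(\mathbb{T}^3)\hookrightarrow L^\infty$ (valid since $4>3$). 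Applying this to $f=\sqrt\rho$ and taking the supremum in time yields $0\leq\rho\leq c_2$, and because $[0,T]$ is finite, $L^{\infty}(0,T;L^{4})\hookrightarrow L^{4}(0,T;L^{4})$ supplies the remaining integrability. For hypothesis (3) I would interpolate in space between the given $L^{\infty}(0,T;L^{2})$ and $L^{p_1}(0,T;L^{q_1})$ bounds on $\nabla\sqrt\rho$, writing $\|\nabla\sqrt\rho\|_{L^4}\leq\|\nabla\sqrt\rho\|_{L^2}^{1-\theta}\|\nabla\sqrt\rho\|_{L^{q_1}}^{\theta}$ with $\frac14=\frac{1-\theta}2+\frac{\theta}{q_1}$, i.e. $\theta=\frac{q_1}{2(q_1-2)}$ (which lies in $(\tfrac12,1]$ precisely because $q_1\geq4$). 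Raising to the fourth power and integrating in time, the $L^2$-factor is absorbed by its $L^\infty_t$ bound, and one needs $\|\nabla\sqrt\rho\|_{L^{q_1}}\in L^{4\theta}_t$; the key observation is that $4\theta=\frac{2q_1}{q_1-2}=p_1$ exactly, by the scaling relation $\frac1{p_1}+\frac1{q_1}=\frac12$, so the time integrability closes with no loss and $\nabla\sqrt\rho\in L^{4}(0,T;L^{4})$ follows.

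With the density reduced to the form demanded by Theorem \ref{the1.1}(2) or (4) in all six combinations, invoking that theorem yields \eqref{EI} and completes the argument. I do not anticipate a genuine obstacle, since this is a reduction rather than a fresh estimate; the only delicate points are the use of mass conservation to pin down the mean of $\sqrt\rho$ in case (2) (the gradient bound alone does not control $\rho$) and the remark in case (3) that the exponent relation is exactly the one rendering the space-time interpolation scale-invariant, so that the endpoint $q_1\geq4$ is respected and the time integrability is consumed without slack.
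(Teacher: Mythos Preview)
Your proposal is correct and follows essentially the same route as the paper: reduce to Theorem \ref{the1.1} by showing that each density hypothesis yields $0\leq\rho\leq c_2$ and $\nabla\sqrt\rho\in L^4(0,T;L^4)$, with the velocity conditions matching (2) and (4) of that theorem verbatim once $d=3$. The interpolation you perform in case (3) is exactly the paper's estimate, with the same exponents $1-\theta=\frac{q_1-4}{2(q_1-2)}$ and $\theta=\frac{q_1}{2(q_1-2)}$; the only cosmetic difference is in case (2), where you pin down the low-order norm of $\sqrt\rho$ via mass conservation ($\|\sqrt\rho\|_{L^2}^2=\int\rho_0$) while the paper instead invokes the a priori bound $\rho\in L^\infty(0,T;L^\gamma)$ with $\gamma>\frac32$ coming from the existence theory of weak solutions---both feed into the same Sobolev embedding $W^{1,4}(\mathbb{T}^3)\hookrightarrow L^\infty$.
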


 As mentioned above, the key point for the proof of Theorem \ref{the1.1} is the following energy equality condition in terms of the velocity and its gradient. Indeed, Theorem \ref{the1.1} is an immediate consequence of the following theorem.
\begin{theorem}\label{the1.2}
 For any $p,q\geq 4$ and $dp< 2q+3d$ with $d\geq 2$, the energy equality \eqref{EI} of   weak solutions  $(\rho, v)$ to the isentropic compressible  Navier-Stokes equation \eqref{INS}-\eqref{INS1} is valid provided
 \be\label{key0}
\ba
&0\leq \rho<c<\infty, \nabla\sqrt{\rho}\in L^{\f{p}{p-3}}(0,T;L^{\f{q}{q-3}}(\mathbb{T}^{d})),\\& v\in L^{p}(0,T;L^{q}(\mathbb{T}^{d})), \nabla v\in L^{\f{p}{p-2}}(0,T;L^{\f{q}{q-2}}(\mathbb{T}^{d})),\ v_0\in L^{\f{q}{2}}(\mathbb{T}^{d}).
\ea\ee
\end{theorem}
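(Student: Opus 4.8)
The plan is to adapt the Lions-type commutator scheme of \cite{[Yu2],[WY]} to a space--time mollification, using Yu's test function so as to accommodate the vacuum. Let $(\cdot)^{\varepsilon}$ denote mollification in $(t,x)$ against a standard symmetric kernel $\theta_{\varepsilon}$, so that $(\cdot)^{\varepsilon}$ commutes with $\partial_t$ and $\nabla$ and is self-adjoint. Mollifying both equations in \eqref{INS} and testing the mollified momentum balance against $v^{\varepsilon}$ over $\mathbb{T}^{d}\times(0,T)$ splits the identity into four groups. The viscous terms converge to the dissipation $-\mu\int_0^T\!\int|\nabla v|^2-(\mu+\lambda)\int_0^T\!\int|\Div v|^2$ from $\nabla v^{\varepsilon}\to\nabla v$ in $L^2(0,T;L^2)$. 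The pressure term equals $-\int_0^T\!\int P^{\varepsilon}\Div v^{\varepsilon}$, and since $0\le\rho<c$ forces $P=\rho^{\gamma}\in L^{\infty}$ it converges to $-\int_0^T\!\int P\Div v$; the renormalized continuity equation (legitimate as $\rho$ is bounded and $\nabla v\in L^2(0,T;L^2)$) then identifies this with the internal-energy difference $\int\frac{\rho^{\gamma}}{\gamma-1}\big|_0^T$.

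The heart of the matter is the kinetic/convective group $I+II:=\int_0^T\!\int\partial_t(\rho v)^{\varepsilon}\cdot v^{\varepsilon}+\int_0^T\!\int\Div(\rho v\otimes v)^{\varepsilon}\cdot v^{\varepsilon}$. Integrating $I$ by parts in time and inserting the mollified continuity equation $\partial_t\rho^{\varepsilon}=-\Div(\rho v)^{\varepsilon}$ to rewrite the term $\tfrac12\rho^{\varepsilon}\partial_t|v^{\varepsilon}|^2$, I would reorganize the group as
\[
I+II=\Big[\int\big((\rho v)^{\varepsilon}\cdot v^{\varepsilon}-\tfrac12\rho^{\varepsilon}|v^{\varepsilon}|^2\big)\,dx\Big]_0^T+\int_0^T\!\!\int C_{\varepsilon}:\nabla v^{\varepsilon}-\int_0^T\!\!\int R_{\varepsilon}\cdot\partial_t v^{\varepsilon},
\]
where $C_{\varepsilon}=(\rho v_j)^{\varepsilon}v_i^{\varepsilon}-(\rho v_iv_j)^{\varepsilon}$ is the convective commutator and $R_{\varepsilon}=(\rho v)^{\varepsilon}-\rho^{\varepsilon}v^{\varepsilon}$ is the Lions commutator of $\rho$ and $v$. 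The boundary bracket converges to $\tfrac12\int\rho|v|^2\big|_0^T$; at $t=0$ this is $\tfrac12\int\rho_0|v_0|^2$, which is finite precisely because $\rho_0$ is bounded and $v_0\in L^{q/2}$, and the passage to the endpoints relies on the weak-in-time continuity of $\rho v$ and $\sqrt{\rho}\,v$ built into Definition \ref{wsdefi}.

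The convective commutator is the easy one. Since $\rho$ is bounded and $v\in L^p(0,T;L^q)$, one has $C_{\varepsilon}\to 0$ strongly in $L^{p/2}(0,T;L^{q/2})$ (both $(\rho v_iv_j)^{\varepsilon}$ and $(\rho v_j)^{\varepsilon}v_i^{\varepsilon}$ converge there to $\rho v_iv_j$), and this space is Hölder-dual to the space of $\nabla v\in L^{p/(p-2)}(0,T;L^{q/(q-2)})$, because $\tfrac2p+\tfrac{p-2}{p}=1$ and $\tfrac2q+\tfrac{q-2}{q}=1$; hence $\int_0^T\!\int C_{\varepsilon}:\nabla v^{\varepsilon}\to 0$. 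This pairing is exactly what fixes the admissible integrability of $v$ and $\nabla v$ recorded in \eqref{key0}.

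The main obstacle is the last term $\int_0^T\!\int R_{\varepsilon}\cdot\partial_t v^{\varepsilon}$: in the vacuum regime one controls only $\sqrt{\rho}\,v$ in the energy space, so the route available without vacuum (where $\rho$ is bounded below and $v\in L^{\infty}(0,T;L^2)$) is closed, and here $R_{\varepsilon}\to 0$ must be played off against $\partial_t v^{\varepsilon}=O(\varepsilon^{-1})$. I would write $R_{\varepsilon}$ through the symmetric space--time identity $R_{\varepsilon}=\tfrac12\iint\theta_{\varepsilon}(z)\theta_{\varepsilon}(z')\,[\rho(w-z)-\rho(w-z')]\,[v(w-z)-v(w-z')]\,dz\,dz'$ with $w=(t,x)$, extracting one factor of $\varepsilon$ from the density difference (its spatial part via $\nabla\rho=2\sqrt{\rho}\,\nabla\sqrt{\rho}$, its temporal part via the continuity equation) and estimating the velocity difference by $v$ and $\nabla v$; the gained $\varepsilon$ compensates the $\varepsilon^{-1}$ carried by $\partial_t v^{\varepsilon}$, so the term is negligible in the limit. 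Carrying this out is exactly the generalized Lions commutator of Lemma \ref{pLions}, whose admissible range of space--time exponents is what forces $\nabla\sqrt{\rho}\in L^{p/(p-3)}(0,T;L^{q/(q-3)})$ together with $p,q\ge 4$ and $dp<2q+3d$. Once this term is shown to vanish, collecting the four groups and letting $\varepsilon\to 0$ yields \eqref{EI}.
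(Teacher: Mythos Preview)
Your decomposition differs from the paper's, and the step you flag as ``the main obstacle'' contains a genuine gap. From the symmetric identity you can indeed extract one factor of $\varepsilon$ from the density difference (using $\nabla\rho=2\sqrt{\rho}\,\nabla\sqrt{\rho}$ and the continuity equation to place $\partial_t\rho,\nabla\rho\in L^{p/(p-2)}(L^{q/(q-2)})$), but pairing this against $\|\partial_t v^{\varepsilon}\|_{L^{p}(L^{q})}\le C\varepsilon^{-1}\|v\|_{L^{p}(L^{q})}$ only yields \emph{boundedness} of $\int R_{\varepsilon}\cdot\partial_t v^{\varepsilon}$, not convergence to zero; ``compensation'' of $\varepsilon$ against $\varepsilon^{-1}$ is $O(1)$, not $o(1)$. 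What is missing is an extra $o(1)$ factor, for instance the observation that $\sup_{|z|,|z'|\le\varepsilon}\|v(\cdot-z)-v(\cdot-z')\|_{L^{p}(L^{q})}\to 0$ by continuity of translations, which upgrades $R_\varepsilon=O(\varepsilon)$ to $R_\varepsilon=o(\varepsilon)$ in $L^{p/(p-1)}(L^{q/(q-1)})$. Your invocation of Lemma~\ref{pLions} is also misplaced: that lemma treats the commutator $\partial(fg)^{\varepsilon}-\partial(f\,g^{\varepsilon})$, not the bilinear defect $R_{\varepsilon}=(fg)^{\varepsilon}-f^{\varepsilon}g^{\varepsilon}$ paired against $\partial_t g^{\varepsilon}$.

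The paper sidesteps $\partial_t v^{\varepsilon}$ entirely. It writes
\[
\int\phi\,v^{\varepsilon}\,\partial_t(\rho v)^{\varepsilon}
=\int\phi\,v^{\varepsilon}\bigl[\partial_t(\rho v)^{\varepsilon}-\partial_t(\rho\,v^{\varepsilon})\bigr]
+\int\phi\,\rho\,\partial_t\tfrac{|v^{\varepsilon}|^{2}}{2}
+\int\phi\,\rho_t\,|v^{\varepsilon}|^{2},
\]
with the \emph{un-mollified} $\rho$ in the last two pieces; the bracket is precisely the object of Lemma~\ref{pLions} and therefore converges to zero in $L^{p/(p-1)}(L^{q/(q-1)})$, while the outer factor $v^{\varepsilon}$ stays bounded in $L^{p}(L^{q})$. (Your scheme can be reduced to this one by splitting $R_{\varepsilon}=[(\rho v)^{\varepsilon}-\rho v^{\varepsilon}]+(\rho-\rho^{\varepsilon})v^{\varepsilon}$ and integrating each piece by parts in time.) Finally, the constraint $dp<2q+3d$ has nothing to do with the commutator: in the paper it enters only through the Aubin--Lions Lemma~\ref{AL}, which yields $\sqrt{\rho},\,\rho^{\gamma}\in C([0,T];L^{q/(q-3)})$ and hence the strong right-continuity $\sqrt{\rho}\,v(t)\to\sqrt{\rho_0}\,v_0$ in $L^{2}$ needed to close the energy identity at $t=0$; your appeal to ``weak-in-time continuity built into Definition~\ref{wsdefi}'' is not sufficient for that endpoint passage.
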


Finally, following the path of the above theorem, one can consider the non-homogenous incompressible Navier-Stokes equations below
\be\left\{\ba\label{nhins}
&\rho_{t} + \Div (\rho v)   =0,   \\
&(\rho v)_{t} +\Div (\rho v\otimes v)-\mu\Delta v +\nabla P = 0,\\
&\Div v=0,\\
\ea\right.\ee
and derive a similar result as
Theorem \ref{the1.2}. Since the velocity is divergence-free, the pressure part in the proof becomes trivial.
 Hence, we just formulate the corresponding result here.
 \begin{theorem}\label{the1.3}
 Let  $(\rho, v)$ be a weak solution  to the Navier-Stokes equation \eqref{nhins}. Assume that the pair $(\rho, v)$ obeys
 \be\label{vein}\ba
  &0 \leq\rho\leq c_{2}<\infty, \nabla\sqrt{\rho}\in L^{4}(0,T;L^{4}(\mathbb{T}^{3})), \nabla v\in L^{2}(0,T; L^{2}(\mathbb{T}^{3})),\\
&v\in L^{p}(0,T;L^{q}(\mathbb{T}^{3}))  ~\text{with}~ \f{1}{p}+
\f{3}{q}=1, 3<q\leq 4, ~\text{and}~ v_0\in L^2(\mathbb{T}^3); \ea\ee
 then,
the energy equality below holds
  \begin{equation}  \label{EIN}
\int_{\mathbb{T}^{3}} \frac{1}{2}\rho |v|^2dx  +\int_0^T\int_{\mathbb{T}^{3}}  \mu|\nabla v|^2  dxdt=\int_{\mathbb{T}^{3}}  \frac{1}{2} \rho_0 |v_0|^2dx.
\end{equation}
 \end{theorem}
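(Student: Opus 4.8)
The plan is to run the proof of Theorem \ref{the1.2} essentially verbatim, the sole structural simplification being that the constraint $\Div v=0$ renders the pressure inert. Concretely, I would fix a space–time mollifier $\eta_\epsilon$, write $f^\epsilon:=f\ast\eta_\epsilon$, and mollify the momentum equation in \eqref{nhins} to the pointwise interior identity $\partial_t(\rho v)^\epsilon+\Div(\rho v\otimes v)^\epsilon-\mu\Delta v^\epsilon+\nabla P^\epsilon=0$. Testing this against the mollified velocity $v^\epsilon$ (the test function of \cite{[Yu2]}) and integrating over $\mathbb{T}^3\times(\tau,t)$, the pressure term vanishes identically because mollification commutes with the divergence:
\[
\int_{\mathbb{T}^3}\nabla P^\epsilon\cdot v^\epsilon\,dx
=-\int_{\mathbb{T}^3}P^\epsilon\,\Div v^\epsilon\,dx
=-\int_{\mathbb{T}^3}P^\epsilon\,(\Div v)^\epsilon\,dx=0 .
\]
This is exactly the simplification advertised after \eqref{nhins}, and it is the only place where the compressible argument exploited the pressure structure.

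The viscous term is harmless, since $\mu\int_\tau^t\!\!\int|\nabla v^\epsilon|^2\to\mu\int_\tau^t\!\!\int|\nabla v|^2$ directly from $\nabla v\in L^2(0,T;L^2)$. The heart of the matter is the remaining two terms: using the mollified continuity equation $\partial_t\rho^\epsilon+\Div(\rho v)^\epsilon=0$ to rewrite $\int\partial_t(\rho v)^\epsilon\cdot v^\epsilon$, the time-derivative and convective contributions combine into the boundary kinetic energy $\tfrac12\int\rho|v|^2$ at the two time slices, plus a family of Lions-type commutators measuring the failure of $\epsilon$-mollification to commute with the nonlinear products $\rho v\otimes v$ and $\rho|v|^2$. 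I would control each of these by the generalized Lions-type commutator Lemma \ref{pLions}, which is designed precisely for mixed space–time integrability. This is where the hypotheses \eqref{vein} are consumed: $v\in L^p(0,T;L^q)$, $\nabla v\in L^2(0,T;L^2)$ and $\nabla\sqrt\rho\in L^4(0,T;L^4)$ supply the factors, and the scaling $\tfrac1p+\tfrac3q=1$ with $3<q\le4$ is the balance making the product of the conjugate norms finite and the commutators tend to $0$ as $\epsilon\to0$. Note that in the subrange $q<4$ one cannot estimate the flux by $\|\nabla v\|_{L^2L^2}\|v\|_{L^4L^4}^2$ alone; a Gagliardo–Nirenberg interpolation of $v\in L^pL^q$ against $\nabla v\in L^2L^2$ (as in Beir\~ao da Veiga–Yang for the homogeneous case) is needed, and $\tfrac1p+\tfrac3q=1$ is exactly what makes that interpolation close.

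The principal obstacle, and the reason the classical Constantin–E–Titi commutator breaks down here, is the vacuum: one controls only $\sqrt\rho\,v\in L^\infty(0,T;L^2)$, \emph{not} $v\in L^\infty(0,T;L^2)$, so the usual parabolic interpolation yielding $v\in L^{10/3}(0,T;L^{10/3})$ is unavailable and must be replaced by the imposed $v\in L^p(0,T;L^q)$. The remedy, as in Theorem \ref{the1.2}, is to route every density-dependent estimate through $\sqrt\rho$ rather than $\rho$. Thus I would write $\nabla\rho=2\sqrt\rho\,\nabla\sqrt\rho$, so that boundedness of $\rho$ together with $\nabla\sqrt\rho\in L^4(0,T;L^4)$ gives $\rho\in L^4(0,T;W^{1,4})$, and arrange every commutator decomposition so that the density always appears as $\sqrt\rho$ multiplying a velocity factor that can be absorbed either by $\sqrt\rho\,v\in L^\infty(0,T;L^2)$ or by $v\in L^p(0,T;L^q)$. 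Verifying that this bookkeeping closes—i.e. that the three exponent triples attached to $v$, $\nabla v$ and $\nabla\sqrt\rho$ are jointly H\"older-conjugate under $\tfrac1p+\tfrac3q=1$—is the step I expect to demand the most care, because in the borderline range $q\le4$ the density-gradient integrability is genuinely used and cannot be discarded.

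Finally, with all commutators shown to vanish, I would send $\epsilon\to0$ to recover the distributional energy identity on $(\tau,t)$ and then let $\tau\to0$. For the initial slice, boundedness of $\rho_0$ and $v_0\in L^2(\mathbb{T}^3)$ make $\rho_0|v_0|^2$ integrable and force the mollified initial kinetic energy to converge to $\tfrac12\int\rho_0|v_0|^2$, while the weak time-continuity of $\sqrt\rho\,v$ handles the left endpoint; taking $t=T$ (valid for a.e.\ $t$) then produces the energy equality \eqref{EIN}.
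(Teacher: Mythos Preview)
Your proposal is correct and aligns with the paper's approach: the paper does not give a separate proof of Theorem~\ref{the1.3}, but simply remarks that one follows the path of Theorem~\ref{the1.2} with the sole simplification that $\Div v=0$ trivializes the pressure term. Your outline reproduces exactly this---the Lions-type commutator Lemma~\ref{pLions}, the routing of density estimates through $\sqrt\rho$, the Gagliardo--Nirenberg interpolation for $3<q<4$ (as in the proof of Theorem~\ref{the1.1}(2)), and the passage to the initial time via strong right-continuity of $\sqrt\rho\,v$ in $L^2$---so there is nothing to add.
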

 \begin{remark}
 When the density $\rho$ becomes a constant,   Theorem \ref{the1.3} reduces to the Lions's classical result \eqref{lions}.
 \end{remark}
 \begin{remark}
Previous results on
energy balance proved in
\cite{[LS],[NNT],[WY]} are all without vacuum. It seems that this theorem will be the first criterion on
energy equality of weak solutions to non-homogenous incompressible Navier-Stokes equations in the presence of vacuum.
 \end{remark}

The remainder of this paper is organized as follows. Section 2 is devoted to the auxiliary lemmas involving mollifier and the key inequality. In Section 3, we first  present the proof of Theorem \ref{the1.2}. Then, based on Theorem \ref{the1.2}, we complete the proof of Theorem \ref{the1.1} and Corollary \ref{coro1.2}.
\section{Notations and some auxiliary lemmas} \label{section2}

First, we introduce some notations used in this paper.
 For $p\in [1,\,\infty]$, the notation $L^{p}(0,\,T;X)$ stands for the set of measurable functions on the interval $(0,\,T)$ with values in $X$ and $\|f(t,\cdot)\|_{X}$ belonging to $L^{p}(0,\,T)$. The classical Sobolev space $W^{k,p}(\Omega)$ is equipped with the norm $\|f\|_{W^{k,p}(\Omega)}=\sum\limits_{|\alpha| =0}^{k}\|D^{\alpha}f\|_{L^{p}(\Omega)}$.  For simplicity, we denote by $$\int_0^T\int_{\Omega} f(t, x)dxdt=\int_0^T\int f\ ~~\text{and}~~ \|f\|_{L^p(0,T;X )}=\|f\|_{L^p(X)}.$$

Let $\eta$ be non-negative   smooth function supported in the space-time ball of radius 1 and its integral equals to 1. We define the rescaled space-time mollifier $\eta_{\varepsilon}(t,x)=\frac{1}{\varepsilon^{d+1}}\eta(\frac{t}{\varepsilon},\frac{x}{\varepsilon})$
$$
f^{\varepsilon}(t,x)=\int_{0}^{T}\int_{\Omega}f(s,y)\eta_{\varepsilon}(t-s,x-y)dyds.
$$

\begin{definition}\label{wsdefi}
 	A pair ($\rho,v$) is called a weak solution to \eqref{INS} with initial data \eqref{INS1} if ($\rho,v$) satisfies
 	\begin{enumerate}[(i)]
 		\item equation \eqref{INS} holds in $D'(0,T;\Omega)$ and
 		\begin{equation}P(\rho ), \rho |v|^2\in L^\infty(0,T;L^1(\Omega)),\ \ \ \nabla v\in L^2(0,T;L^2(\Omega)),
 		\end{equation}
 	\item[(ii)]
 	the density $\rho$ is a renormalized solution of \eqref{INS} in the sense of \cite{[PL]}.
 		\item[(iii)]
 		the energy inequality holds
 		\begin{equation}\label{energyineq}
 		\begin{aligned}
 	E(t) +\int_0^T\int_{\Omega}\left( \mu|\nabla v|^2+(\mu+\lambda)|\Div v|^2 \right) dxdt \leq E(0), 		\end{aligned}\end{equation}
 where $E(t)=\int_{\Omega}\left( \frac{1}{2}\rho |v|^2+\f{\rho^{\gamma}}{\gamma-1} \right) dx$.
 	\end{enumerate}
 \end{definition}
To the knowledge of the authors,  the following statement about Lions's type commutators involving space-time mollifier with $p=q, p_i=q_i$, $(i=1,2)$ can be found in \cite{[LV]}, which plays an important role in recent works \cite{[Yu2],[CY],[CLWX]} and whose complete rigorous   proof was not presented any where (see \cite{[Lions1]} for original version).   To make our work more self-contained
and more readable, we shall outline its proof.
\begin{lemma}
	\label{pLions}Let $1\leq p,q,p_1,q_1,p_2,q_2\leq \infty$,  with $\frac{1}{p}=\frac{1}{p_1}+\frac{1}{p_2}$ and $\frac{1}{q}=\frac{1}{q_1}+\frac{1}{q_2}$.
	Let $\partial$ be a partial derivative in space or time, in addition, let  $\partial_t f,\ \nabla f \in L^{p_1}(0,T;L^{q_1}(\Omega))$, $g\in L^{p_2}(0,T;L^{q_2}(\Omega))$.   Then, there holds $$\|{\partial(fg)^\varepsilon}-\partial(f\,{g}^\varepsilon)\|_{L^p(0,T;L^q(\Omega))}\leq C\left(\|\partial_{t} f\|_{L^{p_{1}}(0,T;L^{q_{1}}(\Omega))}+\|\nabla f\|_{L^{p_{1}}(0,T;L^{q_{1}}(\Omega))}\right)\|g\|_{L^{p_{2}}(0,T;L^{q_{2}}(\Omega))},
	$$
	for some constant $C>0$ independent of $\varepsilon$, $f$ and $g$. Moreover, $${\partial{(fg)^\varepsilon}}-\partial{(f\,{g^\varepsilon})}\to 0\quad\text{ in } {L^{p}(0,T;L^{q}(\Omega))},$$
	as $\varepsilon\to 0$ if $p_2,q_2<\infty.$
\end{lemma}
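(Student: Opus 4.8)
The plan is to reduce the commutator to an explicit $\varepsilon$-uniform integral in which the singular kernel $\partial\eta_\varepsilon$, of order $\varepsilon^{-1}$, is absorbed by a first-order difference of $f$ of order $\varepsilon$. Write $z=(t,x)$ and $w=(s,y)$ for the space-time variables, let $\partial=\partial_{z_k}$ be the given derivative, and recall $f^\varepsilon=f*\eta_\varepsilon$ (convolution over space-time). Since $\partial$ commutes with convolution and $g^\varepsilon$ is smooth, the product rule (legitimate because $\partial f\in L^{p_1}(0,T;L^{q_1}(\Omega))$) gives $\partial(fg)^\varepsilon=(fg)*\partial\eta_\varepsilon$ and $\partial(f\,g^\varepsilon)=(\partial f)\,g^\varepsilon+f\,(g*\partial\eta_\varepsilon)$, so that
\[
\partial(fg)^\varepsilon-\partial(f\,g^\varepsilon)=\int\big[f(z-w)-f(z)\big]\,g(z-w)\,\partial\eta_\varepsilon(w)\,dw-(\partial f)(z)\,g^\varepsilon(z)=:I_\varepsilon(z)-(\partial f)\,g^\varepsilon.
\]
The two terms on the right will be handled separately, $I_\varepsilon$ being the genuinely singular one.

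Next I would insert the fundamental theorem of calculus $f(z-w)-f(z)=-\int_0^1\nabla f(z-\tau w)\cdot w\,d\tau$ (with $\nabla$ the full space-time gradient) and rescale $w=\varepsilon\xi$. The point is that $w\,\partial\eta_\varepsilon(w)\,dw=\xi\,(\partial\eta)(\xi)\,d\xi$, so all powers of $\varepsilon$ cancel and one obtains the $\varepsilon$-uniform representation
\[
I_\varepsilon(z)=-\int_0^1\!\!\int \nabla f(z-\tau\varepsilon\xi)\cdot\xi\,(\partial\eta)(\xi)\,g(z-\varepsilon\xi)\,d\xi\,d\tau.
\]
From here the claimed bound is immediate: applying Minkowski's integral inequality in $(\tau,\xi)$, then H\"older with the splittings $\tfrac1p=\tfrac1{p_1}+\tfrac1{p_2}$, $\tfrac1q=\tfrac1{q_1}+\tfrac1{q_2}$, and the translation invariance of the norms, yields $\|I_\varepsilon\|_{L^p(L^q)}\le\big(\int|\xi|\,|\partial\eta(\xi)|\,d\xi\big)\,\|\nabla f\|_{L^{p_1}(L^{q_1})}\|g\|_{L^{p_2}(L^{q_2})}$, while the remaining term is controlled by $\|(\partial f)g^\varepsilon\|_{L^p(L^q)}\le\|\partial f\|_{L^{p_1}(L^{q_1})}\|g\|_{L^{p_2}(L^{q_2})}$ using Young's inequality $\|g^\varepsilon\|_{L^{p_2}(L^{q_2})}\le\|g\|_{L^{p_2}(L^{q_2})}$. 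Since $\partial f$ is either $\partial_t f$ or a component of $\nabla f$, summing gives the stated estimate with $C=\int|\xi|\,|\partial\eta(\xi)|\,d\xi+1$.

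For the convergence statement I would first compute the moment identity $\int \xi_j\,(\partial_k\eta)(\xi)\,d\xi=-\delta_{jk}$ (integrate by parts and use $\int\eta=1$), whence the integrand of the $\varepsilon$-uniform representation converges pointwise and $I_\varepsilon\to\partial f\,g$ while $(\partial f)g^\varepsilon\to\partial f\,g$, so the limit is $0$. To upgrade this to $L^p(0,T;L^q(\Omega))$ convergence I would exploit that the commutator is bilinear in $(f,g)$ and that the operators $g\mapsto C_\varepsilon$ are uniformly bounded by the estimate just proved; a standard $3\varepsilon$/density argument then reduces matters to $g$ smooth, which is exactly where the hypothesis $p_2,q_2<\infty$ enters (density of smooth functions in $L^{p_2}(L^{q_2})$). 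For smooth $g$ the representation together with dominated convergence finishes the proof, the key being that one only needs continuity of translation in $L^p(L^q)$ — legitimate since $p\le p_2<\infty$ and $q\le q_2<\infty$ — rather than in the possibly nonseparable space $L^{p_1}(L^{q_1})$. The main obstacle I anticipate is precisely this last endpoint matter (allowing $p_1$ or $q_1=\infty$), together with the behaviour of the space-time mollification near the temporal endpoints $t=0,T$; both are resolved by the density reduction and by carrying out the estimates on a compactly contained time interval (equivalently, after extending $f,g$ in time), as is standard for such commutator lemmas.
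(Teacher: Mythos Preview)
Your proposal is correct and rests on the same underlying idea as the paper --- decompose the commutator as a singular piece $G^\varepsilon$ minus $(\partial f)g^\varepsilon$, absorb the extra $\varepsilon^{-1}$ from $\partial\eta_\varepsilon$ via the mean value theorem applied to $f$, then conclude by H\"older and density --- but your execution is noticeably more streamlined in two respects. First, you treat the space-time variable $z=(t,x)$ uniformly and apply the fundamental theorem of calculus to the full space-time gradient at once; the paper instead splits $f(s,y)-f(t,x)=[f(s,y)-f(t,y)]+[f(t,y)-f(t,x)]$ into a pure-time and a pure-space difference, producing two terms $I_1,I_2$ that are estimated through separate, parallel computations. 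Second, your rescaling $w=\varepsilon\xi$ makes the $\varepsilon$-uniformity of the bound visible in a single line, whereas the paper keeps the original variables and introduces a family of auxiliary $L^1$-normalized kernels $J_\varepsilon,\bar J_\varepsilon,J_{1\varepsilon},J_{2\varepsilon}$ (characteristic functions of balls and intervals, suitably averaged) so as to repeatedly apply Young's inequality for convolution in space, then in time. What the paper's longer route buys is that each step is a convolution estimate with an explicit kernel, so the temporal-endpoint issue you flag is absorbed slightly more implicitly; what your route buys is brevity and a transparent constant $C=1+\int|\xi|\,|\partial\eta(\xi)|\,d\xi$. For the convergence both arguments reduce by density to smooth $g$ (using $p_2,q_2<\infty$); the paper then writes the commutator as $[\partial(fg_n)^\varepsilon-\partial(fg_n)]+\partial(f(g_n-g_n^\varepsilon))$ and lets each bracket vanish by standard mollification, while you identify the pointwise limit of $I_\varepsilon$ via the moment identity $\int\xi_j\partial_k\eta=-\delta_{jk}$ and invoke continuity of translation in $L^p(L^q)$.
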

\begin{proof}
	First,  we let
	\begin{equation}\label{l1}
		\begin{aligned}
			\partial (fg)^\varepsilon-\partial (f g^\varepsilon)=\partial(fg)^\varepsilon-f\partial g^\varepsilon-\partial f g^\varepsilon=	G^\varepsilon-\partial f g^\varepsilon,
		\end{aligned}	
	\end{equation}	
	where $G^\varepsilon=\partial (fg)^\varepsilon-f\partial g^\varepsilon$.
	Due to the properties of the Mollifier and the integration by parts, we have
	\begin{equation}\label{l2}
		\begin{aligned}
			|G^\varepsilon (t,x)|=&\left|\int\int \left(f(s,y)-f(t,x)\right)g(s,y)\frac{1}{\varepsilon^{d+1}}\frac{1}{\varepsilon}\partial \eta (\frac{t-s}{\varepsilon},\frac{x-y}{\varepsilon})	dyds\right| \\ 			=&\left|\int\int\left[f(s,y)-f(t,y)+f(t,y)-f(t,x)\right]g(s,y)\frac{1}{\varepsilon^{d+1}}\frac{1}{\varepsilon}\partial \eta (\frac{t-s}{\varepsilon},\frac{x-y}{\varepsilon})	dyds \right| \\
			\leq & C\int_{t-\varepsilon}^{t+\varepsilon}\int_{B(x,\varepsilon)}\frac{1}{\varepsilon^{d+1}}\int_0^1|\partial_t f(t+\tau(s-t), y)|d\tau|g(s,y)| dyds\\
			&\ \ \ \ \ \ +\int_{t-\varepsilon}^{t+\varepsilon}\int_{B(x,\varepsilon)}\frac{1}{\varepsilon^{d+1}}\int_0^1|\nabla f(t,x+\tau(y-x))|d\tau|g(s,y)| dyds\\
			=&I_1+I_2,
		\end{aligned}
	\end{equation}
	here, $B(x,\varepsilon)=\{y\in \Omega; |y-x|<\varepsilon\}$ is an open ball centered at $x$. To control $I_1$ and $I_2$, by the H\"older's inequality,  we know that
	\begin{equation}\label{l3}
		\begin{aligned}
			I_1\leq& C\int_{t-\varepsilon}^{t+\varepsilon}\frac{1}{\varepsilon}\left(\int_{B(x,\varepsilon)}\frac{1}{\varepsilon^{d}}\int_0^1|\partial_t f(t+\tau(s-t),y)|^{s_1}d\tau dy\right)^{\frac{1}{s_1}}\left(\int_{B(x,\varepsilon)}\frac{1}{\varepsilon^d}|g(s,y)|^{s_2}dy\right)^{\frac{1}{s_2}} ds\\
			\leq& C\int_{t-\varepsilon}^{t+\varepsilon}\frac{1}{\varepsilon}\left(\int_{B(0,\varepsilon)}\frac{1}{\varepsilon^{d}}\int_0^1|\partial_t f(t+\tau(s-t),x-z)|^{s_1}d\tau dz\right)^{\frac{1}{s_1}}\left(\int_{B(0,\varepsilon)}\frac{1}{\varepsilon^d}|g(s,x-z)|^{s_2}dz\right)^{\frac{1}{s_2}} ds,
		\end{aligned}
	\end{equation}
	where $\frac{1}{s_1}+\frac{1}{s_2}=1, 1\leq s_1\leq q_1$ and $1\leq s_2\leq q_2$.
	Moreover,  notice that
	\begin{equation}\label{l4}
		\begin{aligned}
			&\int_{B(0,\varepsilon)}\frac{1}{\varepsilon^{d}}\int_0^1|\partial_t f(t+\tau(s-t),x-z)|^{s_1}d\tau dz\\
			=&\int_{\mathbb{R}^d}\int_0^1|\partial_t f(t+\tau(s-t),x-z)|^{s_1}d\tau \frac{{\bf 1}_{B(0,\varepsilon)}(z)}{\varepsilon^{d}} dz\\
			=&\left(\int_0^1 |\partial_t f(t+\tau(s-t))|^{s_1}d\tau* J_\varepsilon\right)(x),
		\end{aligned}
	\end{equation}
	and
	\begin{equation}\label{l5}
		\begin{aligned}
			&\int_{B(0,\varepsilon)}\frac{1}{\varepsilon^d}|g(s,x-z)|^{s_2}dz\\
			=&\int_{\mathbb{R}^d}|g(s,x-z)|^{s_2}\frac{{\bf 1}_{B(0,\varepsilon)}(z)}{\varepsilon^d}dz\\
			=&\left(|g(s)|^{s_2} * J_\varepsilon\right)(x),
		\end{aligned}
	\end{equation}
	where $J_\varepsilon(z)=\frac{{\bf 1}_{B(0,\varepsilon)}(z)}{\varepsilon^d} \geq 0$ and $\|J_\varepsilon\|_{L^1{(\mathbb{ R}^{d})}}=$measure of $B(0,1)$.
	Then substituting \eqref{l4} and \eqref{l5} into \eqref{l3}, we infer
	\begin{equation}\label{l6}
		I_1\leq C\int_{t-\varepsilon}^{t+\varepsilon}\frac{1}{\varepsilon}\left(\int_0^1 |\partial_t f(t+\tau(s-t))|^{s_1}d\tau* J_\varepsilon(x)\right)^{\frac{1}{s_1}}\Big(|g(s)|^{s_2} * J_\varepsilon(x)\Big)^{\frac{1}{s_2}}ds.
	\end{equation}
	Then using the Minkowski inequality and H\"older's inequality, one can derive that
		\begin{align}
			&\|I_1\|_{L^q(\Omega)}\\
			\leq& C\B\|  \int_{t-\varepsilon}^{t+\varepsilon}\frac{1}{\varepsilon}\left(\int_0^1 |\partial_t f(t+\tau(s-t))|^{s_1}d\tau* J_\varepsilon(x)\right)^{\frac{1}{s_1}}\Big(|g(s)|^{s_2} * J_\varepsilon(x)\Big)^{\frac{1}{s_2}}ds\B\|_{L^q(\Omega)}\nonumber \\
			\leq& C	 \int_{t-\varepsilon}^{t+\varepsilon}\frac{1}{\varepsilon}\B\|\left(\int_0^1 |\partial_t f(t+\tau(s-t))|^{s_1}d\tau* J_\varepsilon(x)\right)^{\frac{1}{s_1}}\Big(|g(s)|^{s_2} * J_\varepsilon(x)\Big)^{\frac{1}{s_2}}\B\|_{L^q(\Omega)}ds\nonumber\\
			\leq& C \int_{t-\varepsilon}^{t+\varepsilon}\frac{1}{\varepsilon}\B\|\left(\int_0^1 |\partial_t f(t+\tau(s-t))|^{s_1}d\tau* J_\varepsilon(x)\right)^{\frac{1}{s_1}}\B\|_{L^{q_1}(\Omega)}\B\|\Big(|g(s)|^{s_2} * J_\varepsilon(x)\Big)^{\frac{1}{s_2}}\B\|_{L^{q_2}(\Omega)}ds\nonumber\\
			\leq& C \int_{t-\varepsilon}^{t+\varepsilon}\frac{1}{\varepsilon}\left(\int_0^1\|\partial_t f(t+\tau (s-t))\B\|_{L^{q_1}(\Omega)}^{s_1}d \tau \right)^{\f{1}{s_1}}\|g(s)\|_{L^{q_2}(\Omega)}ds\nonumber\\
			\leq& C \left(\int_{t-\varepsilon}^{t+\varepsilon}\frac{1}{\varepsilon}\int_0^1\|\partial_t f(t+\tau (s-t))\|_{L^{q_1}(\Omega)}^{s_1}d \tau ds\right)^{\frac{1}{s_1}}\left(\int_{t-\varepsilon}^{t+\varepsilon}\frac{1}{\varepsilon}\|g(s)\|_{L^{q_2}(\Omega)}^{{s_2}}ds\right)^{\frac{1}{s_2}}\nonumber\\
			\leq & C\left(\int_0^1\frac{1}{\tau\varepsilon}\int_{-\tau\varepsilon}^{\tau\varepsilon}\|\partial_t f(t-\varsigma)\|_{L^{q_1}(\Omega)}^{s_1} d\varsigma d \tau  \right)^{\frac{1}{s_1}}\left(\int_{-\varepsilon}^{\varepsilon}\frac{1}{\varepsilon}\|g(t-\varsigma)\|_{L^{q_2}(\Omega)}^{s_2}d\varsigma\right)^{\frac{1}{s_2}}\nonumber\\
			\leq& C\Big(\|\partial_t f(t-\varsigma)\|_{L^{q_1}(\Omega)}^{s_1}* J_{1\varepsilon}(\varsigma)\Big)^{\frac{1}{s_1}}\Big(\|g(t-\varsigma)\|_{L^{q_2}(\Omega)}^{s_2}*J_{2\varepsilon}(\varsigma)\Big)^{\frac{1}{s_2}}\label{17},
		\end{align}	
	where $J_{1\varepsilon}=\int_0^1 \frac{{\bf 1}_{(-\tau\varepsilon, \tau\varepsilon)}}{\tau\varepsilon}(\varsigma)d\tau \geq 0,\ J_{2\varepsilon}=\frac{{\bf 1}_{(-\varepsilon,\varepsilon)}}{\varepsilon}(\varsigma)\geq 0$ and $\|J_{1\varepsilon}\|_{L^1{(\mathbb{R})}}=\|J_{2\varepsilon}\|_{L^1{(\mathbb{R})}}=2$.
	
	Using the H\"older's inequality again, $1\leq s_1\leq p_1$, and\ $1\leq s_2\leq p_2$, this immediately gives
	\begin{equation}\label{l8}
		\begin{aligned}
			&\|I_1\|_{L^p(L^q)}\\\leq &C \left\|\Big(\|\partial_t f(t-\varsigma)\|_{L^{q_1}(\mathbb{T}^d)}^{s_1}* J_{1\varepsilon}(\varsigma)\Big)^{\frac{1}{s_1}}\right\|_{L^{p_1}(0,T)}\left\|\Big(\|g(t-\varsigma)\|_{L^{q_2}(\mathbb{T}^d)}^{s_2}*J_{2\varepsilon}(\varsigma)\Big)^{\frac{1}{s_2}}\right\|_{L^{p_2}(0,T)}\\
\leq& C\|\partial_t f\|_{L^{p_1}(L^{q_1})}\|g\|_{L^{p_2}(L^{q_2})}.
		\end{aligned}
	\end{equation}
	As the same manner of the derivation above, we see that
	\begin{equation}\label{l9}
		\begin{aligned}
			I_2&\leq C\int_{t-\varepsilon}^{t+\varepsilon}\frac{1}{\varepsilon}\left(\int_{B(x,\varepsilon)}\frac{1}{\varepsilon^d}\int_0^1|\nabla f(t, x+\tau(y-x))|^{s_1} d\tau dy\right)^{\frac{1}{s_1}}\left(\int_{B(x,\varepsilon)}\frac{1}{\varepsilon^d}|g(s,y)|^{s_2}dy \right)^{\frac{1}{s_2}}ds\\
			&\leq C\int_{t-\varepsilon}^{t+\varepsilon}\frac{1}{\varepsilon}\left(|\nabla f(t,x-z)|^{s_1}* \overline{J}_{\varepsilon}(z)\right)^{\frac{1}{s_1}}\left(|g(s)|^{s_2}*J_\varepsilon(z)\right)^{\frac{1}{s_2}}ds
		\end{aligned}
	\end{equation}
	where $\overline{J}_{\varepsilon}(z)=\frac{{\bf 1}_{B(0,\tau\varepsilon)}}{(\tau\varepsilon)^d}\geq 0$ and $\|\overline{J}_\varepsilon(z)\|_{L^1(\mathbb{R}^{d})}=$ measure of ball $B(0,1)$.
	
	Hence, we further get
	\begin{align}
		\|I_2\|_{L^q(\Omega)}\leq& C\B\| \int_{t-\varepsilon}^{t+\varepsilon}\frac{1}{\varepsilon}\left(|\nabla f(t,x-z)|^{s_1}* \overline{J}_{\varepsilon}(z)\right)^{\frac{1}{s_1}}\left(|g(s,x-z)|^{s_2}*J_\varepsilon(z)\right)^{\frac{1}{s_2}}ds\B\|_{L^q(\Omega)}\nonumber\\
		\leq & C\int_{t-\varepsilon}^{t+\varepsilon}\frac{1}{\varepsilon}\|\left(|\nabla f(t,x-z)|^{s_1}* \overline{J}_{\varepsilon}(z)\right)^{\frac{1}{s_1}}\left(|g(s,x-z)|^{s_2}*J_\varepsilon(z)\right)^{\frac{1}{s_2}}\|_{L^q(\Omega)} ds\nonumber\\
		\leq & C\int_{t-\varepsilon}^{t+\varepsilon}\frac{1}{\varepsilon}\|\nabla f(t)\|_{L^{q_1}(\Omega)}\|g(s)\|_{L^{q_2}(\Omega)} ds\nonumber\\
		\leq &C\|\nabla f(t)\|_{L^{q_1}(\Omega)} \int_{t-\varepsilon}^{t+\varepsilon}\frac{1}{\varepsilon}\|g(s)\|_{L^{q_2}(\Omega)} ds\nonumber\\
		\leq	& C\|\nabla f(t)\|_{L^{q_1}(\Omega)} \left(\|g(t-\varsigma)\|_{L^{q_2}(\Omega)} *J_{2\varepsilon}(\varsigma)\right),\label{l10}
	\end{align}	
	which implies that
	\begin{equation}\label{l11}
		\begin{aligned}
			\|I_2\|_{L^p(L^q)}&\leq C\B\|\|\nabla f(t)\|_{L^{q_1}(\Omega)}\left(\|g(t-\varsigma)\|_{L^{q_2}(\Omega)} *J_{2\varepsilon}(\varsigma)\right)\B\|_{L^p(0,T)}\\
			&\leq C\|\nabla f\|_{L^{p_1}(L^{q_1})}\left\|\left(\|g(t-\varsigma)\|_{L^{q_2}(\Omega)} *J_{2\varepsilon}(\varsigma)\right)\right\|_{L^{p_2}}\\
			&\leq C\|\nabla f\|_{L^{p_1}(L^{q_1})}\|g\|_{L^{p_2}(L^{q_2})}.
		\end{aligned}
	\end{equation}
	Then together with \eqref{l1}, \eqref{l8} and \eqref{l11}, we  obtain that
	\begin{equation}
		\begin{aligned}
			\|\partial(fg)^\varepsilon-\partial(f g^\varepsilon)\|_{L^p(L^q)}&\leq C\left(\|G^\varepsilon\|_{L^p(L^q)}+\|\partial f g^\varepsilon\|_{L^p(L^q)}\right)\\
			&\leq C\left(\|\partial_t f\|_{L^{p_1}(L^{q_1})}+\|\nabla f\|_{L^{p_1}(L^{q_1})}\right)\|g\|_{L^{p_2}(L^{q_2})}.
		\end{aligned}
	\end{equation}
	Furthermore, if $1\leq p_2,q_2<\infty$, let $\{g_n\}\in C_{0}^\infty (\Omega)$ with $ g_n\rightarrow g$ strongly in $L^{p_2}(L^{q_2})$. Thus, by the density arguments and properties of the standard mollification, we
	arrive at
	\begin{equation}
		\begin{aligned}
			&\|\partial(fg)^\varepsilon-\partial(f g^\varepsilon)\|_{L^p(L^q)}\\\leq& C \|\partial \left(f\left(g-g_n\right)\right)^\varepsilon-\partial \left(f\left(g-g_n\right)^\varepsilon\right)+\left(\partial \left(f g_n\right)^\varepsilon-\partial\left(f g_n^\varepsilon\right)\right)\|_{L^p(L^q)}\\
			\leq& C\left(\|\partial_t f\|_{L^{p_1}(L^{q_1})}+\|\nabla f\|_{L^{p_1}(L^{q_1})}\right)\|g-g_n\|_{L^{p_2}(L^{q_2})}+C\|\partial (f g_n)^\varepsilon-\partial (f g_n)\|_{L^{p}(L^{q})}\\
			&\ \ \ \ + C\|\partial (f (g_n-g_n ^\varepsilon))\|_{L^{p}(L^{q})}\rightarrow 0,\ \ as \ \varepsilon\rightarrow 0.
		\end{aligned}
	\end{equation}
 	This concludes the proof.
 \end{proof}
 \begin{lemma}\label{lem2.2}
	Let $ p,q,p_1,q_1,p_2,q_2\in[1,+\infty)$ with $\frac{1}{p}=\frac{1}{p_1}+\frac{1}{p_2},\frac{1}{q}=\frac{1}{q_1}+\frac{1}{q_2} $. Assume $f\in L^{p_1}(0,T;L^{q_1}(\Omega)) $ and $g\in L^{p_2}(0,T;L^{q_2}(\Omega))$. Then for any $\varepsilon>0$, there holds
	\begin{equation}\label{a4}
	\|(fg)^\varepsilon-f^\varepsilon g^\varepsilon\|_{L^p(0,T;L^q(\Omega))}\rightarrow 0,\ \ \ as\ \varepsilon\rightarrow 0.
	\end{equation}
\end{lemma}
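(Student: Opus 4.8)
The plan is to combine a uniform-in-$\varepsilon$ operator bound with a density argument, reducing the general case to smooth functions by means of a three-term splitting. This is the classical Friedrichs/DiPerna--Lions commutator scheme, adapted here to the product and to the space-time mollifier $\eta_\varepsilon$; it is the natural companion of Lemma \ref{pLions}.

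First I would record the two estimates that make everything uniform in $\varepsilon$. Since $\eta_\varepsilon\ge 0$ and $\int\eta_\varepsilon=1$, Young's convolution inequality yields $\|h^\varepsilon\|_{L^a(0,T;L^b(\Omega))}\le \|h\|_{L^a(0,T;L^b(\Omega))}$ for every $h$ and every pair $(a,b)$ (after extending $h$ by zero past the temporal endpoints, which perturbs only a set of times of measure $O(\varepsilon)$). Combined with H\"older's inequality in both space and time under the hypotheses $\frac1p=\frac1{p_1}+\frac1{p_2}$, $\frac1q=\frac1{q_1}+\frac1{q_2}$, this gives
\[
\|(fg)^\varepsilon\|_{L^p(L^q)}\le \|fg\|_{L^p(L^q)}\le \|f\|_{L^{p_1}(L^{q_1})}\|g\|_{L^{p_2}(L^{q_2})},
\]
\[
\|f^\varepsilon g^\varepsilon\|_{L^p(L^q)}\le \|f^\varepsilon\|_{L^{p_1}(L^{q_1})}\|g^\varepsilon\|_{L^{p_2}(L^{q_2})}\le \|f\|_{L^{p_1}(L^{q_1})}\|g\|_{L^{p_2}(L^{q_2})}.
\]
In particular the bilinear map $(f,g)\mapsto (fg)^\varepsilon-f^\varepsilon g^\varepsilon$ is bounded on $L^{p_1}(L^{q_1})\times L^{p_2}(L^{q_2})$ with norm independent of $\varepsilon$.

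Next, because $p_i,q_i<\infty$, the smooth (or continuous, compactly supported) functions are dense in $L^{p_i}(0,T;L^{q_i}(\Omega))$, so I would pick $f_n\to f$ in $L^{p_1}(L^{q_1})$ and $g_n\to g$ in $L^{p_2}(L^{q_2})$, and split
\[
(fg)^\varepsilon-f^\varepsilon g^\varepsilon=\big[(fg)^\varepsilon-(f_ng_n)^\varepsilon\big]+\big[(f_ng_n)^\varepsilon-f_n^\varepsilon g_n^\varepsilon\big]+\big[f_n^\varepsilon g_n^\varepsilon-f^\varepsilon g^\varepsilon\big].
\]
For the first bracket, $(fg)^\varepsilon-(f_ng_n)^\varepsilon=(fg-f_ng_n)^\varepsilon$, and writing $fg-f_ng_n=(f-f_n)g+f_n(g-g_n)$ together with the uniform bound above controls its $L^p(L^q)$ norm by $\|f-f_n\|_{L^{p_1}(L^{q_1})}\|g\|_{L^{p_2}(L^{q_2})}+\|f_n\|_{L^{p_1}(L^{q_1})}\|g-g_n\|_{L^{p_2}(L^{q_2})}$, which is small for large $n$ \emph{uniformly} in $\varepsilon$. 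The third bracket is handled identically after writing $f_n^\varepsilon g_n^\varepsilon-f^\varepsilon g^\varepsilon=(f_n^\varepsilon-f^\varepsilon)g_n^\varepsilon+f^\varepsilon(g_n^\varepsilon-g^\varepsilon)$ and using $\|f_n^\varepsilon-f^\varepsilon\|_{L^{p_1}(L^{q_1})}\le \|f_n-f\|_{L^{p_1}(L^{q_1})}$. For the middle bracket, with $n$ fixed, both $(f_ng_n)^\varepsilon$ and $f_n^\varepsilon g_n^\varepsilon$ converge uniformly to $f_ng_n$ as $\varepsilon\to 0$ (standard mollification of continuous functions), so that bracket tends to $0$ in $L^p(L^q)$ on the finite-measure set $(0,T)\times\Omega$. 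Choosing $n$ large first and then $\varepsilon$ small gives the claim.

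The main obstacle is that the product structure is not preserved under mollification—$(fg)^\varepsilon$ does not factor—so the commutator cannot be estimated directly; the resolution is exactly the common smooth approximation $(f_n,g_n)$ together with the $\varepsilon$-uniform operator bound from Young's inequality, which lets the density error be absorbed \emph{before} sending $\varepsilon\to 0$. A secondary technical point, harmless but worth noting, is the behaviour of the space-time mollifier near $t=0$ and $t=T$: it is dealt with by extending $f,g$ by zero (or by reflection) outside $(0,T)$, so that Young's inequality applies globally and the resulting temporal boundary layer contributes a vanishing amount as $\varepsilon\to 0$.
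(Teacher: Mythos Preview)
Your argument is correct, but it takes a more elaborate route than the paper's. The paper avoids the density argument entirely: it simply inserts $fg$ and then $f^\varepsilon g$ as telescoping intermediaries, writing
\[
(fg)^\varepsilon - f^\varepsilon g^\varepsilon = \big[(fg)^\varepsilon - fg\big] + \big[(f-f^\varepsilon)g\big] + \big[f^\varepsilon(g-g^\varepsilon)\big],
\]
and then bounds each piece by H\"older together with the standard mollifier convergence $h^\varepsilon\to h$ in $L^a(0,T;L^b(\Omega))$ (applied to $h=fg$, $h=f$, $h=g$ respectively, which is legitimate since all exponents are finite). No approximating sequence $(f_n,g_n)$ is needed, because the operator in question is bounded rather than differential. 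Your density-plus-uniform-bound scheme is the right tool when the commutator contains a derivative---as in Lemma~\ref{pLions}, where $h^\varepsilon\to h$ alone does not help---but here it is more machinery than required. What you gain is a template that treats both lemmas uniformly; what the paper gains is a three-line proof.
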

\begin{proof}
	By the triangle inequality, one have
	\begin{equation*}
	\begin{aligned}
	&\|(fg)^\varepsilon-f^\varepsilon g^\varepsilon\|_{L^p(0,T;L^q(\Omega))}\\
	\leq & C\left(\|(fg)^\varepsilon- (fg)\|_{L^p(0,T;L^q(\Omega))}+\|fg-f^\varepsilon g\|_{L^p(0,T;L^q(\Omega))}+\|f^\varepsilon g-f^\varepsilon g^\varepsilon\|_{L^p(0,T;L^q(\Omega))}\right)\\
	\leq &C\Big(\|(fg)^\varepsilon- fg\|_{L^p(0,T;L^q(\Omega))}+\|f-f^\varepsilon\|_{L^{p_1}(0,T;L^{q_1}(\Omega))}\|g\|_{L^{p_2}(0,T;L^{q_2}(\Omega))}\\
	&\ \ \ \ \ +\|f^\varepsilon\|_{L^{p_1}(0,T;L^{q_1}(\Omega))}\|g-g^\varepsilon\|_{L^{p_2}(0,T;L^{q_2}(\Omega))}\Big),
	\end{aligned}
	\end{equation*}
	then, together with the properties of the standard mollification, we can obtain \eqref{a4}.
\end{proof}
Finally, we recall the generalized Aubin-Lions Lemma to extend the energy equality up to the initial time.
\begin{lemma}[\cite{[Simon]}]\label{AL}
	Let $X\hookrightarrow B\hookrightarrow Y$ be three Banach spaces with compact imbedding $X \hookrightarrow\hookrightarrow Y$. Further, let there exist $0<\theta <1$ and $M>0$ such that
	\begin{equation}\label{le1}
	\|v\|_{B}\leq M\|v\|_{X}^{1-\theta}\|v\|_{Y}^\theta\ \ for\ all\ v\in X\cap Y.\end{equation}
Denote for $T>0$,
\begin{equation}\label{le2}
	W(0,T):=W^{s_0,r_0}((0,T), X)\cap W^{s_1,r_1}((0,T),Y)
\end{equation}
with
\begin{equation}\label{le3}
	\begin{aligned}
		&s_0,s_1 \in \mathbb{R}; \ r_0, r_1\in [1,\infty],\\
		s_\theta :=(1-\theta)s_0&+\theta s_1,\ \f{1}{r_\theta}:=\f{1-\theta}{r_0}+\f{\theta}{r_1},\ s^{*}:=s_\theta -\f{1}{r_\theta}.
	\end{aligned}
\end{equation}
Assume that $s_\theta>0$ and $F$ is a bounded set in $W(0,T)$. Then, we have

If $s_{*}\leq 0$, then $F$ is relatively compact in $L^p((0,T),B)$ for all $1\leq p< p^{*}:=-\f{1}{s^{*}}$.

If $s_{*}> 0$, then $F$ is relatively compact in $C((0,T),B)$.

\end{lemma}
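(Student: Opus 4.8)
The plan is to deduce compactness from a Kolmogorov--Riesz--Fr\'echet type criterion in the Bochner space, after first upgrading the hypotheses to a bound in a single interpolated space. Two preliminary reductions are crucial. First, the interpolation inequality \eqref{le1} together with the compactness of $X\hookrightarrow\hookrightarrow Y$ already forces the embedding $X\hookrightarrow\hookrightarrow B$ to be compact: if $\{v_n\}$ is bounded in $X$ it is bounded in $Y$, a subsequence converges in $Y$, and then $\|v_n-v_m\|_{B}\le M\|v_n-v_m\|_{X}^{1-\theta}\|v_n-v_m\|_{Y}^{\theta}$ shows it is Cauchy in $B$. Second, \eqref{le1} and Young's inequality give the Ehrling-type bound $\|v\|_{B}\le \eta\|v\|_{X}+C_{\eta}\|v\|_{Y}$ for every $\eta>0$. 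I would then invoke Simon's criterion: a bounded family $F\subset L^{p}((0,T),B)$ is relatively compact as soon as (a) the local time-averages $\{\frac1h\int_{t}^{t+h}f\,ds\}$ are relatively compact in $B$, and (b) the time translations decay uniformly, $\sup_{f\in F}\|\tau_{h}f-f\|_{L^{p}((0,T-h),B)}\to 0$ as $h\to 0$; for the endpoint conclusion one argues equicontinuity of $F$ as $B$-valued continuous functions.

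Next I would identify the correct target space by interpolating the two fractional-order bounds. Since $F$ is bounded in $W^{s_{0},r_{0}}((0,T),X)$ and in $W^{s_{1},r_{1}}((0,T),Y)$, the pointwise inequality \eqref{le1} promotes these, via interpolation on the Sobolev--Slobodeckij scale in the time variable, to a uniform bound of $F$ in $W^{s_{\theta},r_{\theta}}((0,T),B)$ with $s_{\theta},r_{\theta}$ as in \eqref{le3}. Applying the scalar fractional Sobolev embedding in time to this bound produces the dichotomy: when $s^{*}=s_{\theta}-\frac{1}{r_{\theta}}\le 0$ we have $W^{s_{\theta},r_{\theta}}(0,T)\hookrightarrow L^{p^{*}}(0,T)$ with $\frac{1}{p^{*}}=-s^{*}$, so $F$ is bounded in $L^{p^{*}}((0,T),B)$; when $s^{*}>0$ we have $W^{s_{\theta},r_{\theta}}(0,T)\hookrightarrow C([0,T])$, so $F$ is bounded in $C((0,T),B)$. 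The hypothesis $s_{\theta}>0$ guarantees that the interpolated time regularity is genuine and that these embeddings apply.

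With the target space in hand I would verify Simon's criterion. For (a), the averages are bounded in $X$ uniformly in $f$ and $t$ (from the $L^{r_{0}}((0,T),X)$ bound contained in $W^{s_{0},r_{0}}(X)$), hence relatively compact in $B$ by the compact embedding $X\hookrightarrow\hookrightarrow B$ established above. For (b), the Ehrling bound gives $\|\tau_{h}f-f\|_{B}\le \eta\|\tau_{h}f-f\|_{X}+C_{\eta}\|\tau_{h}f-f\|_{Y}$; the $X$-term is uniformly bounded and is made small by taking $\eta$ small, while the $Y$-term decays uniformly because boundedness in $W^{s_{1},r_{1}}((0,T),Y)$ yields a translation estimate $\|\tau_{h}f-f\|_{L^{r_{1}}((0,T-h),Y)}\le C|h|^{\sigma}$ for some $\sigma>0$. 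Together with the $L^{p^{*}}$ bound, which provides equi-integrability in $L^{p}$ for every $p<p^{*}$, this yields relative compactness in $L^{p}((0,T),B)$ when $s^{*}\le 0$; in the case $s^{*}>0$ the same averaging and translation estimates, now read as pointwise precompactness and equicontinuity of $B$-valued functions, give relative compactness in $C((0,T),B)$ by Arzel\`a--Ascoli.

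The step I expect to be the main obstacle is the vector-valued fractional interpolation used to pass from the two separate bounds to a single bound in $W^{s_{\theta},r_{\theta}}((0,T),B)$, together with the attendant exponent bookkeeping: one must justify the interpolation of Sobolev--Slobodeckij spaces with distinct integrability exponents through the nonlinear inequality \eqref{le1}, handle possibly negative values of $s_{0}$ or $s_{1}$ (only the combination $s_{\theta}>0$ being assumed), and treat the borderline case $s^{*}>0$ where genuine continuity in time, not merely an $L^{p}$ bound, must be extracted. Once this interpolation and the uniform translation estimate are secured, the compactness follows from Simon's criterion as above.
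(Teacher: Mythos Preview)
The paper does not supply a proof of this lemma: it is quoted verbatim as a known result from Simon's article \cite{[Simon]} and is used as a black box in Section~3 to obtain the temporal continuity \eqref{c14}--\eqref{c15}. There is therefore no ``paper's own proof'' to compare against.

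Your outline is a reasonable sketch of the strategy in Simon's original paper, and you have correctly identified the two core mechanisms: the compactness upgrade $X\hookrightarrow\hookrightarrow B$ via \eqref{le1}, and the Ehrling splitting combined with a translation estimate in $Y$. However, the step you flag as the main obstacle is genuinely delicate and your sketch glosses over it. The passage from separate bounds in $W^{s_0,r_0}((0,T),X)$ and $W^{s_1,r_1}((0,T),Y)$ to a bound in $W^{s_\theta,r_\theta}((0,T),B)$ is not a linear interpolation of Banach spaces: the inequality \eqref{le1} is multiplicative and nonlinear, so one cannot simply invoke complex or real interpolation of the time-Sobolev scale. Simon handles this by working directly with difference quotients and H\"older's inequality on the Slobodeckij seminorm, and the case of negative $s_0$ or $s_1$ (where $W^{s,r}$ must be interpreted by duality or extension) requires separate care. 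Your claim that ``boundedness in $W^{s_1,r_1}((0,T),Y)$ yields a translation estimate $\|\tau_h f-f\|_{L^{r_1}}\le C|h|^\sigma$'' is also not automatic when $s_1\le 0$; one needs $s_\theta>0$ and the interpolation to produce the decay, not the $Y$-bound alone. These are precisely the technical points that make Simon's paper nontrivial, and your outline would need substantial expansion there to become a proof.
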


\section{Proof of Theorem $\ref{the1.1}$, Theorem \ref{the1.2}   and  Corollary \ref{coro1.2}}
In this section, we first present the proof of Theorem \ref{the1.2}. Then,
making  the use of interpolation inequality and the Poincar\'e inequality, we prove  the Theorem \ref{the1.1} and  Corollary \ref{coro1.2}  by the results of Theorem \ref{the1.2}.
  \begin{proof}[Proof of Theorem \ref{the1.2}]
Let $\phi(t)$ be a smooth function compactly supported in $(0,+\infty)$.
Multiplying $\eqref{INS}_2$ by $(\phi v^{\varepsilon})^\varepsilon$, then integrating over $(0,T)\times \Omega$ , we infer that
\begin{equation}\label{c1} \begin{aligned}
		\int_0^T\int \phi(t)v^{\varepsilon}\B[\partial_{t}(\rho v)^{\varepsilon}+ \Div(\rho v\otimes v)^{\varepsilon}+\nabla P(\rho)^\varepsilon- \mu \Delta v^\varepsilon-(\mu+\lambda)\nabla( \Div v)^\varepsilon\B]=0.
\end{aligned}\end{equation}
To pass the limit of $\varepsilon$, we
reformulate  every term of the last equation.
A  straightforward  computation leads to
\begin{equation}\label{c2}
	\begin{aligned}
		\int_0^T\int \phi(t)v^{\varepsilon} \partial_{t} (\rho v )^{\varepsilon}=&\int_0^T\int \phi(t)v^{\varepsilon}\B[ \partial_{t} (\rho v )^{\varepsilon}-\partial_{t}(\rho v^{\varepsilon})\B]+ \int_0^T\int \phi(t)v^{\varepsilon} \partial_{t}(\rho v^{\varepsilon}) \\
		=& \int_0^T\int \phi(t)v^{\varepsilon} \B[\partial_{t} (\rho v )^{\varepsilon}-\partial_{t}(\rho v^{\varepsilon})\B]+\int_0^T\int \phi(t)\rho\partial_t{\frac{|v^{\varepsilon}|^2}{2}} \\
		&+\int_0^T\int \phi(t)\rho_t|v^{\varepsilon}|^2.
\end{aligned}\end{equation}
It follows from integration by parts and the mass equation $\eqref{INS}_1$ that
\begin{align}
	&\int_0^T\int\phi(t)v^{\varepsilon} \Div(\rho v\otimes v)^{\varepsilon}\nonumber\\
	=& \int_0^T\int\phi(t)  v^{\varepsilon}  \Div[(\rho v\otimes v)^{\varepsilon}-(\rho  v)\otimes v^{\varepsilon}]+\int_0^T\int\phi(t)v^{\varepsilon}\Div(\rho  v\otimes v^{\varepsilon})\nonumber\\
	=& -\int_0^T\int\phi(t)  \nabla v^{\varepsilon}  [(\rho v\otimes v)^{\varepsilon}-(\rho  v)\otimes v^{\varepsilon}]+  \int_0^T\int \phi(t)\left(\Div (\rho v ) |v^{\varepsilon}|^{2}+\f12 \rho v \nabla|v^{\varepsilon}  |^{2}
	\right)\nonumber\\
	=& -\int_0^T\int\phi(t) \nabla  v^{\varepsilon}  [(\rho v\otimes v)^{\varepsilon}-(\rho  v)\otimes v^{\varepsilon}]+\f{1}{2}\int_0^T\int\phi(t) \Div (\rho v ) |v^{\varepsilon}|^{2}\nonumber\\
	=& -\int_0^T\int\phi(t)  \nabla v^{\varepsilon}  [(\rho v\otimes v)^{\varepsilon}-(\rho  v)\otimes v^{\varepsilon}]-\frac{1}{2}\int_0^T\int \phi(t) \partial_t \rho |v^{\varepsilon}|^{2}. \label{c3}
\end{align}
We rewrite the pressure term as
\begin{equation}\label{c6}
	\begin{aligned}
		&\int_0^T\int\phi(t)v^{\varepsilon}\nabla (\rho^\gamma)^{\varepsilon}= \int_0^T\int\phi(t) [v^{\varepsilon}\nabla(\rho^\gamma)^{\varepsilon}-v\nabla (\rho^\gamma)]+\int_0^T\int\phi(t)v \nabla (\rho^\gamma).
\end{aligned} \end{equation}
Then using the integration by parts and mass equation $\eqref{INS}_1$ again, we find
$$\ba
\int_0^T\int\phi(t)v \cdot\nabla (\rho^\gamma)
=&-\int_0^T\int\phi(t) \rho^{\gamma-1} \rho\text{div\,}v\\
=&\int_0^T\int\phi(t) \rho^{\gamma-1}(\partial_{t}\rho+v\cdot\nabla\rho)
\\
=&\f{1}{\gamma}\int_0^T\int\phi(t)  \partial_{t}\rho^{\gamma } +\f{1}{\gamma}\int_0^T\int\phi(t)v\cdot\nabla\rho^{\gamma },
\ea $$
which in turn means that
\begin{equation}\label{c62}\begin{aligned}
		\int_0^T\int\phi(t)v \cdot\nabla (\rho^\gamma)
		= \f{1}{\gamma-1}\int_0^T\int\phi(t)  \partial_{t}\rho^{\gamma }.
\end{aligned}\end{equation}
Thanks to integration by parts, we arrive at
\begin{equation}\label{c8}\begin{aligned} &-\mu\int_0^T\int \phi(t)v^{\varepsilon} \Delta v^{\varepsilon}=\mu \int_0^T\int \phi(t)|\nabla v^{\varepsilon}|^{2},\\
		&- (\mu+\lambda)\int_0^T\int \phi(t)v^{\varepsilon} \nabla\text{div\,}v^\varepsilon=(\mu+\lambda)\int_0^T\int \phi(t)|\Div{v^{\varepsilon}}|^{2}.
\end{aligned}  \end{equation}
Plugging  \eqref{c2}-\eqref{c8} into \eqref{c1} and using the integration by parts, we conclude  that
\begin{equation}\label{c9}
	\begin{aligned}
		&-\int_0^T\int \phi(t)_t\left(\rho{\frac{|v^{\varepsilon}|^2}{2}}+\f{1}{\gamma-1}\rho^{\gamma }\right)+\int_0^T\int \left(\mu |\nabla v^\varepsilon|^2+(\mu+\lambda)|\Div v^\varepsilon|^2\right)\\
		=&-\int_0^T\int \phi(t)v^{\varepsilon} \B[\partial_{t} (\rho v )^{\varepsilon}-\partial_{t}(\rho v^{\varepsilon})\B]+\int_0^T\int\phi(t) \nabla v^{\varepsilon}  [(\rho v\otimes v)^{\varepsilon}-(\rho  v)\otimes v^{\varepsilon}]\\ &-\int_0^T\int\phi(t) [v^{\varepsilon}\nabla(\rho^\gamma)^{\varepsilon}-v\nabla (\rho^\gamma)].
\end{aligned}\end{equation}
It is enough  to prove that the terms on the right hand-side of \eqref{c9} tend to zero as $\varepsilon\rightarrow 0$.
In view of H\"older's inequality and Lemma \eqref{pLions}, we know that
\begin{equation}\label{3.81}\begin{aligned}
\int_s^t\int \phi(t)v^{\varepsilon} \B[\partial_{t} (\rho v )^{\varepsilon}-\partial_{t}(\rho v^{\varepsilon})\B]&\leq C\|v^{\varepsilon}\|_{L^{p}(L^{q})}
\|\partial_{t} (\rho v )^{\varepsilon}-\partial_{t}(\rho v^{\varepsilon})\|_{L^{\f{p}{p-1}}(L^{\f{q}{q-1}})}\\
&\leq C\|v\|^{2}_{L^{p}(L^{q})}
\left(\|\rho_{t}\|_{L^{\f{p}{p-2}}(L^{\f{q}{q-2}})}+\|\nabla \rho\|_{L^{\f{p}{p-2}}(L^{\f{q}{q-2}})}\right).
\end{aligned}\end{equation}
To bound $\rho_{t}$ and $\nabla \rho$, we
employ  mass equation to obtain
$$
\rho_t=-2\sqrt{\rho}v\cdot\nabla\sqrt{\rho}-{\rho}\text{div}v, \ and\ \nabla \rho=2\sqrt{\rho}\nabla \sqrt{\rho}. $$
As a consequent, the triangle inequality and H\"older's inequality  guarantee that
\begin{equation}\label{3.91}\begin{aligned}
\|\rho_{t}\|_{L^{\f{p}{p-2}}(L^{\f{q}{q-2}})}&\leq
 C\left(\|-2\sqrt{\rho}v\cdot\nabla\sqrt{\rho} \|_{L^{\f{p}{p-2}}(L^{\f{q}{q-2}})}+
\|{\rho}\text{div}v\|_{L^{\f{p}{p-2}}(L^{\f{q}{q-2}})}\right)\\
&\leq C\left(
\|v \|_{L^{p}(L^{q})}\|\nabla\sqrt{\rho} \|_{L^{\f{p}{p-3}}(L^{\f{q}{q-3}})}+\| \nabla v\|_{L^{\f{p}{p-2}}(L^{\f{q}{q-2}})}\right),
 \end{aligned}\end{equation}
and
\begin{equation}\label{3.92}
	\begin{aligned}
	\|\nabla \rho\|_{L^{\f{p}{p-2}}(L^{\f{q}{q-2}})}\leq & C\|\sqrt{\rho}\nabla \sqrt{\rho}\|_{L^{\f{p}{p-2}}(L^{\f{q}{q-2}})}\leq C\|\nabla \sqrt{\rho}\|_{L^{\f{p}{p-3}}(L^{\f{q}{q-3}})}.
	\end{aligned}
\end{equation}
Plugging \eqref{3.91} and \eqref{3.92} into \eqref{3.81}, we get
 \begin{equation}\label{c3.11}\begin{aligned}
&\int_0^T\int \phi(t)v^{\varepsilon} \B[\partial_{t} (\rho v )^{\varepsilon}-\partial_{t}(\rho v^{\varepsilon})\B]\\\leq&
C\|v\|^{2}_{L^{p}(L^{q})}
\left(\left(\|v \|_{L^{p}(L^{q})}+1\right)\|\nabla\sqrt{\rho} \|_{L^{\f{p}{p-3}}(L^{\f{q}{q-3}})}+\| \nabla v\|_{L^{\f{p}{p-2}}(L^{\f{q}{q-2}})}\right)\leq C.
 \end{aligned}\end{equation}
 From Lemma \eqref{pLions}, we end up with, as $\varepsilon\rightarrow0$,
 $$
 \int_0^T\int \phi(t)v^{\varepsilon} \B[\partial_{t} (\rho v )^{\varepsilon}-\partial_{t}(\rho v^{\varepsilon})\B]\rightarrow0.$$
  Using the integration by parts, we observe that
 \begin{equation}\label{3.11}\begin{aligned}
 		&\left|\int_0^T\int\phi(t)  \nabla v^{\varepsilon}  [(\rho v\otimes v)^{\varepsilon}-(\rho  v)\otimes v^{\varepsilon}]\right| \\
 		\leq &C\|\nabla v^\varepsilon\|_{L^{\f{p}{p-2}}(L^{\f{q}{q-2}})}\|(\rho v\otimes v)^{\varepsilon}-(\rho  v)\otimes v^{\varepsilon}\|_{L^{\f{p}{2}}(L^{\f{q}{2}})}\\
 		\leq & C\|\nabla v\|_{L^{\f{p}{p-2}}(L^{\f{q}{q-2}})}\left(\|(\rho v \otimes v)^\varepsilon- \rho v \otimes v\|_{L^{\f{p}{2}}(L^{\f{q}{2}})}+\|\rho v\otimes v- \rho v\otimes v^\varepsilon\|_{L^{\f{p}{2}}(L^{\f{q}{2}})}\right)\\
 		\leq & C\|\nabla v\|_{L^{\f{p}{p-2}}(L^{\f{q}{q-2}})} \left(\|(\rho v \otimes v)^\varepsilon- \rho v \otimes v\|_{L^{\f{p}{2}}(L^{\f{q}{2}})}+\|\rho v\|_{L^p(L^q)}\|v-v^\varepsilon\|_{L^p(L^q)}\right).
 \end{aligned}\end{equation}
 Hence, by the standard properties of the mollification, we have
 $$\int_0^T\int\phi(t)  \nabla v^{\varepsilon}  [(\rho v\otimes v)^{\varepsilon}-(\rho  v)\otimes v^{\varepsilon}] \rightarrow0 \text{ as }\varepsilon\rightarrow0.$$
 According to  the upper bound of the density, H\"older's inequality on bound domain, we observe that
 $$\|\nabla(\rho^\gamma)\|_{L^{\f{p}{p-1}}(L^{\f{q}{q-1}})}\leq
 C \|\nabla \sqrt{\rho} \|_{L^{\f{p}{p-1}}(L^{\f{q}{q-1}})}\leq C\|\nabla\sqrt{\rho} \|_{L^{\f{p}{p-3}}(L^{\f{q}{q-3}})},$$
 which in turn implies that
 \begin{equation}\label{c10}\begin{aligned}
 \int_0^T\int\phi(t) [v^{\varepsilon}\nabla(\rho^\gamma)^{\varepsilon}-v\nabla (\rho^\gamma)]\rightarrow0,
 \end{aligned}\end{equation}
 where we have used lemma \ref{lem2.2}.

Then together with \eqref{c3.11}-\eqref{c10}, passing to the limits as $\varepsilon\rightarrow 0$, we know that
\begin{equation}\label{c11}
	\begin{aligned}
		-\int_0^T\int \phi_t \left(\frac{1}{2}\rho |v|^2+\frac{\rho^\gamma}{\gamma -1}\right)+\int_0^T\int\phi(t) \left(\mu |\nabla v|^2+(\mu+\lambda)|\Div v|^2\right)=0.	
	\end{aligned}
\end{equation}
The next objective is to get the energy equality up to the initial time $t=0$ by the similar method in \cite{[CLWX]} and \cite{[Yu2]}, for the convenience of the reader and the integrity of the paper, we give the details. First, we derive  from the mass equation $\eqref{INS}_1$ that
\begin{equation}\label{c12}
	\begin{aligned}
		\partial_t(\rho^\gamma )=-\gamma \rho ^\gamma \Div v-2\gamma \rho^{\gamma -\frac{1}{2}}v\cdot \nabla \sqrt{\rho},	
	\end{aligned}
\end{equation}
and
\begin{equation}\label{c13}
	\begin{aligned}
		\partial_t(\sqrt{\rho})=\frac{\sqrt{\rho }}{2}\Div v+v\cdot \nabla \sqrt{\rho},
	\end{aligned}
\end{equation}
which together with \eqref{key0} gives
$$\partial_t(\rho^\gamma, \sqrt{\rho})\in L^{\f{p}{p-2}}(0,T;L^{\f{q}{q-2}}(\Omega)),\ \ \ \nabla (\rho^\gamma, \sqrt{\rho })\in L^{\f{p}{p-3}}(0,T; L^{\f{q}{q-3}}(\Omega)).$$
Hence, using the Aubin-Lions Lemma \ref{AL}, we can obtain
\begin{equation}\label{c14}
	(\rho^\gamma ,\ \sqrt{\rho })\in C([0,T];L^{\f{q}{q-3}}(\Omega)),\ for \ dp< 2q+3d,\ p\geq 4\ and\ q\geq 4.
\end{equation}
Furthermore, by the moumentum equality $\eqref{INS}_2$, we know that
$$\rho v\in L^\infty(0,T;L^2(\Omega)) \cap H^1(0,T; H^{-1}(\Omega)),$$
then, in light of  the Aubin-Lions Lemma \ref{AL}, we conclude that
\begin{equation}\label{c15}
	\rho v\in C([0,T]; L^2_{weak}(\Omega)).
\end{equation}
Meanwhile, using the natural energy \eqref{energyineq}, \eqref{c14} and \eqref{c15}, we have
\begin{equation}\label{c16}
	\begin{aligned}
		0&\leq \overline{\lim_{t\rightarrow 0}}\int |\sqrt{\rho} v-\sqrt{\rho_0}v_0|^2 dx\\
		&=2\overline{\lim_{t\rightarrow 0}}\left(\int \left(\f{1}{2}\rho |v|^2 +\f{1}{\gamma -1}\rho ^\gamma \right)dx-\int\left(\f{1}{2}\rho_0 |v_0|^2+\f{1}{\gamma -1}\rho_0 ^\gamma \right)dx\right)\\
		&\ \ \ +2\overline{\lim_{t\rightarrow 0}}\left(\int\sqrt{\rho_0}v_0\left(\sqrt{\rho_0}v_0-\sqrt{\rho} v\right)dx+\f{1}{\gamma -1}\int \left(\rho_0^\gamma -\rho^\gamma\right)dx\right)\\
		&\leq 2\overline{\lim_{t\rightarrow 0}}\int \sqrt{\rho_0}v_0\left(\sqrt{\rho_0}v_0-\sqrt{\rho}v\right)dx\\
		&=2\overline{\lim_{t\rightarrow 0}}\int v_0 \left(\rho_0 v_0 -\rho v\right)dx+2\overline{\lim_{t\rightarrow 0}}\int v_0 \sqrt{\rho }v\left(\sqrt{\rho }-\sqrt{\rho_0}\right)dx=0,
	\end{aligned}
\end{equation}
from
which it follows
\begin{equation}\label{c17}
	\sqrt{\rho} v(t)\rightarrow \sqrt{\rho }v(0)\ \ strongly\ in\ L^2(\Omega)\ as\ t\rightarrow 0^+.
\end{equation}
Similarly, one has the right temporal continuity  of $\sqrt{\rho}v$ in $L^2(\Omega)$, hence, for any $t_0\geq 0$, we infer that
\begin{equation}\label{c18}
	\sqrt{\rho} v(t)\rightarrow \sqrt{\rho }v(t_0)\ \ strongly\ in\ L^2(\Omega)\ as\ t\rightarrow t_0^+.
\end{equation}
Before we go any further, it should be noted that \eqref{c11} remains valid for function $\phi$ belonging to $W^{1,\infty}$ rather than $C^1$, then for any $t_0>0$, we redefine the test function $\phi$ as $\phi_\tau$ for some positive $\tau$ and $\alpha $ such that $\tau +\alpha <t_0$, that is
\begin{equation}
	\phi_\tau(t)=\left\{\begin{array}{lll}
		0, & 0\leq t\leq \tau,\\
		\f{t-\tau}{\alpha}, & \tau\leq t\leq \tau+\alpha,\\
		1, &\tau+\alpha \leq t\leq t_0,\\
		\f{t_0-t}{\alpha }, & t_0\leq t\leq t_0 +\alpha ,\\
		0, & t_0+\alpha \leq t.
	\end{array}\right.
\end{equation}
Then substituting this test function into \eqref{c11}, we arrive at
\begin{equation}
	\begin{aligned}
		-\int_\tau^{\tau+\alpha}\int& \f{1}{\alpha}\left(\f{1}{2}\rho v^2+\f{1}{\gamma-1}\rho^\gamma \right)+\f{1}{\alpha}\int_{t_0}^{t_0+\alpha}\int 	\left(\f{1}{2}\rho v^2+\f{1}{\gamma-1}\rho^\gamma \right)\\
		&+\int_{\tau}^{t_0+\alpha}\int \phi_\tau \left(\mu |\nabla v|^2+\left(\mu+\lambda\right)|\Div v|^2\right)=0.
	\end{aligned}	
\end{equation}
Taking $\alpha\rightarrow 0$ and using  the fact that $\int_0^t\int\left(\mu |\nabla v|^2+\left(\mu+\lambda\right)|\Div v|^2\right)$ is continuous with respect to $t$ and the  Lebesgue point Theorem, we deduce that
\begin{equation}
	\begin{aligned}
		-\int&\left(\f{1}{2}\rho v^2+\f{1}{\gamma-1}\rho^\gamma \right)(\tau)dx+\int\left(\f{1}{2}\rho v^2+\f{1}{\gamma-1}\rho^\gamma \right)(t_0)dx\\
		&+\int_\tau^{t_0}\int\left(\mu |\nabla v|^2+\left(\mu+\lambda\right)|\Div v|^2\right)dxds=0.
	\end{aligned}
\end{equation}
Finally, letting $\tau\rightarrow 0$, using the continuity of $\int_0^t\int\left(\mu |\nabla v|^2+\left(\mu+\lambda\right)|\Div v|^2\right)$, \eqref{c14} and \eqref{c17}, we can obtain
\begin{equation}\ba
	\int\left(\f{1}{2}\rho v^2+\f{1}{\gamma-1}\rho^\gamma \right)(t_0)dx+\int_0^{t_0}\int&\left(\mu |\nabla v|^2+\left(\mu+\lambda\right)|\Div v|^2\right)dxds\\=&\int\left(\f{1}{2}\rho_0 v_0^2+\f{1}{\gamma-1}\rho_0^\gamma \right)dx.
	\ea\end{equation}
Then we complete the proof of Theorem \ref{the1.2}.
\end{proof}
Next, with the help of  Theorem \ref{the1.2},
we are in a position to prove Theorem \ref{the1.1}.
 \begin{proof}[Proof of Theorem \ref{the1.1}]
(1)
Choose $p=q=4$ in \eqref{key0}, we obtain Lions  type  energy
  conservation condition.
\be\ba\label{spe1}
&0\leq \rho<c<\infty, \nabla\sqrt{\rho}\in L^{4}(L^{4}),\\& v\in L^{4}(L^{4}), \nabla v\in L^{2}(L^{2})\ and \ v_0\in L^2(\mathbb{T}^d).\ea\ee
(2)
With the help of interpolation, we show \eqref{vei819}  can be reduced to \eqref{vei}. Indeed, taking advantage of  the Gagliardo-Nirenberg   inequality on bounded domains, H\"older's inequality and Young's inequality, we know that
\begin{equation}
	\begin{aligned}
		\|v\|_{L^{4}(0,T;L^{4}(\mathbb{T}^d))}
		\leq & C\|v\|_{L^2(0,T;L^6(\mathbb{T}^d))}^{\frac{3(4-q)}{2(6-q)}}\|v\|_{L^p(0,T;L^q(\mathbb{T}^d))}^{\frac{q}{2(6-q)}}\\
		\leq& C\left(\|\nabla v\|_{L^2(0,T;L^2(\mathbb{T}^d))}+\|v\|_{L^p(0,T;L^q(\mathbb{T}^d))}\right)^{\frac{3(4-q)}{2(6-q)}}\|v\|_{L^p(0,T;L^q(\mathbb{T}^d))}^{\frac{q}{2(6-q)}}\\
		\leq &C\left(\|\nabla v\|_{L^2(0,T;L^2(\mathbb{T}^d))}^{\frac{3(4-q)}{2(6-q)}}\|v\|_{L^p(0,T;L^q(\mathbb{T}^d))}^{\frac{q}{2(6-q)}}
		+\|v\|_{L^p(0,T;L^q(\mathbb{T}^d))}\right).
	\end{aligned}
\end{equation}
 From the the result just proved, we obtain  energy equality via \eqref{vei}.
 We finish the proof of this case.

(3) Taking $p=4,q=6$ in \eqref{key0}, we immediately get the desired result.

(4) We move  to the proof of the last case.
Firstly,  we temporarily assume that the following fact is valid,
\be\label{assumed}
\int_{\Omega}vdx\in L^{p}(0,T).
\ee
It follows from the triangle inequality and Poincar\'e-Sobolev inequality that, for $q<d$,
$$\ba
\|v\|_{L^{p}(L^{\f{dq}{d-q}})}
\leq&\B\|v-\f{1}{|\Omega|}\int_{\Omega}vdy
\B\|_{L^{p}(L^{\f{dq}{d-q}})}+\B\|\f{1}{|\Omega|}\int_{\Omega}vdy
\B\|_{L^{p}(L^{\f{dq}{d-q}})}\\
\leq& C\|\nabla v
\|_{L^{p}(L^{q})}+C\B\| \int_{\Omega}vdy
\B\|_{L^{p} (0,T)}.
\ea$$
From the result  case (2) in  just proved, it is enough to
prove the quality \eqref{assumed} we have assumed.
The H\"older inequality and classical Poincar\'e inequality ensure that
\be\ba\label{3.28}
\B|\int_{\Omega}\rho\left(v-\f{1}{|\Omega|}\int_{\Omega}vdy\right)  dx\B|\leq& \|\rho\|_{L^\f{q}{q-1}}
\B\|v-\f{1}{|\Omega|}\int_{\Omega}vdy\B\|_{L^{q}}\\
\leq& C\|\rho\|_{L^\f{q}{q-1}}
\|\nabla v\|_{L^{q}}.
\ea\ee
Using the H\"older's inequality once again and  the upper bound of the density, we find
\be\label{3.29}
\int \rho vdx\leq C\|\sqrt{\rho}v\|_{L^{2}}.
\ee
In view of the  triangle inequality, \eqref{3.28} and \eqref{3.29}, we infer that
$$\ba
\B|\int_{\Omega} v dy\B|=&\f{|\Omega|}{\int_{\Omega} \rho_{0}dx} \B|\int_{\Omega} \left(\f{1}{|\Omega|}\int_{\Omega}vdy\right)\rho dx\B|\\
\leq&\f{C}{\int_{\Omega} \rho_{0}dx} \B|\int_{\Omega}\B[\left( \f{1}{|\Omega|}\int_{\Omega}vdy\right)-v\B]\rho dx\B|+
\f{C}{\int_{\Omega} \rho_{0}dx} \B| \int_{\Omega} \rho vdx\B|\\
\leq& C
\|\nabla v\|_{L^{q}}+C\|\sqrt{\rho}v\|_{L^{2}}.
\ea$$
which means
$$\ba
\B\|\int_{\Omega} v dy\B\|_{L^{p}(0,T)}
\leq& C
\|\nabla v\|_{L^{p}(L^{q})}+C\|\sqrt{\rho}v\|_{L^{\infty}(L^{2})}.
\ea$$
Thus, it follows from the triangle inequality and Poincar\'e-Sobolev inequality that
$$\ba
\|v\|_{L^{p}(L^{q})}
\leq&\B\|v-\f{1}{|\Omega|}\int_{\Omega}vdy
\B\|_{L^{p}(L^{q})}+\B\|\f{1}{|\Omega|}\int_{\Omega}vdy
\B\|_{L^{p}(L^{q})}\\
\leq& C\|\nabla v
\|_{L^{p}(L^{q})}+C\B\| \int_{\Omega}vdy
\B\|_{L^{p} (0,T)}\\
\leq & C\left( \|\nabla v\|_{L^{p}(L^{q})}+\|\sqrt{\rho}v\|_{L^{\infty}(L^{2})}\right),
\ea$$
which means $v\in L^p(D^{1,q})$ implies $v\in L^p(W^{1,q})$.
which implies $$v\in L^p(0,T;W^{1,q}(\mathbb{T}^d)),\ q<d.$$
In summary,  we have shown
$$\nabla v\in L^{p}(0,T;L^{r}(\mathbb{T}^{d}))  ~\text{with}~ \f{1}{p}+
\f{3}{r}=1+\f{3}{d}, \ \f{3d}{d+3}<r\leq \f{4d}{d+4},$$
turns out
$$v\in L^{p}(0,T;L^{q}(\mathbb{T}^{d}))  ~\text{with}~ \f{1}{p}+
\f{3}{q}=1, 3<q\leq 4.$$

At this stage, we complete the proof of Theorem \ref{the1.1}.
\end{proof}
\begin{remark}
To prove \eqref{tivei}, one may directly employ the following
 Poincar\'e inequality which can be proved by slightly modifying the proof of \cite[Lemma 3.2, page 47]{[Feireisl2004]}: \\
 Let $v\in W^{1,q}(\Omega)$ with $q\geq 2$, and let $\rho $ be a non-negative function such that
	\begin{equation}\label{le4-1}
		0<M\leq \int_{\Omega}\rho dx,\ \int_{\Omega}\rho^\gamma dx\leq E_0<\infty,
	\end{equation}
where $\Omega\subset \mathbb{R}^d$ is a bounded domain and $\gamma>1$.
Then there exists a constant $c$ depending solely on $M$ and $E_0$ such that
\begin{equation}\label{le4-2}
	\|v\|_{L^q(\Omega)}^q\leq c(E_0, M)\left(\|\nabla v\|_{L^q(\Omega)}^q+\left(\int_{\Omega}\rho |v|dx\right)^q\right).
\end{equation}

\end{remark}

Next, with the help of  Theorem \ref{the1.1},  we prove Corollary \ref{coro1.2}.
\begin{proof}[Proof of  Corollary \ref{coro1.2}]
	According to Theorem \ref{the1.1}, it suffices to show that all the density hypotheses yield
	$$
	0 \leq\rho\leq c_{2}<\infty, \nabla\sqrt{\rho}\in L^{4}(0,T;L^{4}(\mathbb{T}^{3})).$$
	On the one hand, for the case (2), it is enough to show $0 \leq\rho\leq c_{2}<\infty.$
	The Sobolev embedding theorem ensures that $ \nabla\sqrt{\rho}\in L^{\infty}(0,T;L^{4}(\mathbb{T}^{3}))$ and $ {\rho}\in L^{\infty}(0,T;L^{\gamma}(\mathbb{T}^{3}))$  with $\gamma >\f{3}{2}$ (due to the theory of the  global existence of weak solutions to \eqref{INS}-\eqref{INS1} obtained by Feireisl- Novotn\'y-Petzeltov\' a in \cite{[FN]}) leads to $$0 \leq\rho\leq c_{2}<\infty.$$
	For the case (3),
	it follows from the well-know interpolation inequality and H\"older's inequality that, for $\f{1}{p_1}+\f{1}{q_1}=\f{1}{2}$, with $q_{1}\geq 4$,
	$$\ba
	\|\nabla\sqrt{\rho}\|_{L^{4}(0,T;L^{4}(\mathbb{T}^{3}) ))}  \leq& C\|\nabla\sqrt{\rho}\|_{L^{\infty}\left(0,T;L^{2}(\mathbb{T}^{3})\right)}^{\frac{(q_{1}-4)}{2(q_{1}-2)}}\|\nabla\sqrt{\rho}\|_{L^{p_1}(0,T;L^{ {q_{1} }}(\mathbb{T}^{3}))}^{\frac{ q_{1}}{2(q_{1}-2)}}<\infty.
	\ea$$
	This completes  the proof of this corollary.
\end{proof}
\section*{Acknowledgement}

Ye was partially supported by the National Natural Science Foundation of China  under grant (No.11701145) and China Postdoctoral Science Foundation (No. 2020M672196).
 Wang was partially supported by  the National Natural
 Science Foundation of China under grant (No. 11971446, No. 12071113   and  No.  11601492).
Wei was partially supported by the National Natural Science Foundation of China under grant (No. 11601423, No. 11771352, No. 11871057).

\end{document}